\numberwithin{equation}{section}
\numberwithin{figure}{section}
\numberwithin{table}{section}
\numberwithin{algorithm}{section}
\numberwithin{equation}{section} 
\newtheorem{theorem}{Theorem}[section]
\newtheorem{problem}{Problem}[section]
\newtheorem{remark}{Remark}[section]
\newtheorem{proposition}{Proposition}[section]
\newtheorem{example}{Example}[section]
\newenvironment{proof}[1][Proof]{\textbf{#1.} }
\begin{document}

\title{Bioluminescence tomography via a shape optimization method based on a complex-valued model}

\author[1]{Qianqian Wu}
\author[1,2]{Rongfang Gong\thanks{Corresponding author. Emails: grf\_math@nuaa.edu.cn (R. Gong), wuqianqian@nuaa.edu.cn (Q. Wu), wgong@lsec.cc.ac.cn (W. Gong), zhangziyi21@mails.ucas.ac.cn (Z. Zhang), sfzhu@math.ecnu.edu.cn (S. Zhu)}} 
\author[3]{Wei Gong}
\author[4]{Ziyi Zhang}
\author[5]{Shengfeng Zhu}

\affil[1]{\small School of Mathematics, Nanjing University of Aeronautics and Astronautics, Nanjing, 211106, Jiangsu, China.}
\affil[2]{\small Key Laboratory of Mathematical Modelling and High Performance Computing of Air Vehicles (NUAA), MIIT, Nanjing, 211106, Jiangsu, China.}
\affil[3]{\small State Key Laboratory of Mathematical Sciences, Academy of Mathematics and Systems Science, Chinese Academy of Sciences, Beijing, 100190, China.}
\affil[4]{\small School of Mathematical Sciences,	University of Chinese Academy of Sciences, Beijing, 100049, China.}
\affil[5]{\small School of Mathematical Sciences \& Key Laboratory of Ministry of Education \& Shanghai Key Laboratory of Pure Mathematics and Mathematical Practice, East China Normal University, Shanghai 200241, China.}

\date{}
\maketitle
\vspace{-1em}
\abstract{In this study, we investigate the inverse source problem arising in bioluminescence tomography, the objective of which is to reconstruct both the support and the intensity of an internal light source from boundary measurements governed by an elliptic model. A shape optimization framework is developed in which the source intensity and its support are decoupled through first-order optimality conditions. To enhance the stability of the reconstruction, we incorporate a parameter-dependent coupled complex boundary method together with perimeter and volume regularizations. Source support is represented by a level set function, allowing the algorithm to naturally accommodate topological changes and recover multiple, closely spaced, or nested source regions. Theoretical justifications for the proposed formulation and regularization strategy are established, and extensive numerical experiments are performed to assess the reconstruction accuracy for both noise-free and noisy data. The results demonstrate that our method achieves robust and accurate recovery of source geometry and intensity, and exhibits clear advantages over existing approaches.}

\textbf{Keywords:} bioluminescence tomography, inverse source problem, elliptic equation, shape optimization, regularization technique

\section{Introduction}\label{sec:introduction}
Bioluminescence tomography (BLT) is an emerging molecular imaging modality that has attracted considerable attention owing to its ability to noninvasively monitor physiological and pathological processes \textit{in vivo} at cellular and molecular levels (\cite{Bentley2022, WangBL2023}). This modality exploits the intrinsic light emitted by bioluminescent sources, thereby eliminating the need for external excitation, significantly reducing background noise, and enhancing imaging sensitivity. These advantages make BLT particularly well suited for small animal studies and preclinical applications (\cite{Deng2022, Shi2018}).

The central objective of BLT is to quantitatively reconstruct the spatial distribution and the intensity of internal bioluminescent sources from optical signals measured on a subject’s surface (\cite{Wang2006, Wang2008}). By solving this inverse problem, one can localize and characterize light-emitting regions within biological tissues. This imaging capability is particularly valuable for investigating tumor progression, gene expression, and other dynamic biological processes in real-time. Accordingly, BLT serves as a powerful tool for advancing our understanding of complex biological systems and evaluating the efficacy of therapeutic interventions.

In BLT research, diffusion approximation (DA) is commonly used to model photon propagation. Let $\Omega \subset \mathbb{R}^d $ ($d \le 3$) be an open bounded domain with a Lipschitz boundary $\Gamma := \partial\Omega$, and $\Gamma_0 \subset \Gamma$ denote the measured portion of the boundary. Without loss of generality, we formulate the steady-state BLT problem using the following elliptic boundary value problem (\cite{Han2008}):

\begin{problem}\label{prob:blt}
	Find a source function $q$ inside $\Omega$ such that the solution $u$ of the forward Robin boundary value problem (BVP)
	\begin{equation}
		\left \{
		\begin{aligned}
			- {\rm div} (D \nabla u) + \mu_{a} u &= q & \quad \textrm{in}\ \Omega,\\[0.5em]
			u + 2 A D \partial_n u &= g^{-} & \quad \textrm{on}\ \Gamma,
		\end{aligned}
		\right.
		\label{eq:bvp}
	\end{equation}
	satisfies the outgoing flux density on the boundary:
	\begin{equation}
		g = -D \partial_n u \quad \textrm{on}\ \Gamma_{0}.
		\label{eq:bvp_meas}
	\end{equation}
\end{problem}

Here, $D = 1/[3(\mu_a + \mu_s^{\prime})]$ denotes the diffusion coefficient, and $\mu_a$ and $\mu_s^{\prime}$ denote the absorption and reduced scattering coefficients, respectively; $\partial_n$ denotes the outward normal derivative and $g^{-}$ denotes the incoming flux on $\Gamma$, which vanishes when imaging is performed in a dark environment; $A=A(x)=(1+R(x))/(1-R(x))$, where $R(x) \approx -1.4399 \gamma(x)^{-2} + 0.7099 \gamma(x)^{-1} + 0.6681 + 0.0636 \gamma(x)$ and $\gamma(x)$ is the refractive index at $x\in\Gamma$. In the following, we consider the case $g^{-}\equiv0$ and $\Gamma_0=\Gamma$.

Problem \ref{prob:blt} does not ensure a unique reconstruction of the source and is essentially ill-posed. Numerical experiments indicate that with a single set of boundary measurements, a strong source confined to a small region may be indistinguishable from a weak source distributed over a larger region (\cite{Gong2016ccbm}). Such non-uniqueness poses a fundamental challenge in BLT, particularly in biomedical applications where precise localization is crucial. Consequently, obtaining a unique reconstruction of the source region from limited boundary data is a central and challenging problem in BLT research. To address this issue, existing studies have adopted three principal strategies.

The first strategy is to enrich the measurement data used during reconstruction. For example, \cite{Gong2016rte, GaoH2010, Klose2005} employed the steady-state radiative transfer equation (RTE) as a forward model to reconstruct internal isotropic light sources from angularly resolved boundary measurements of photon flux. Theoretically, such angle-dependent measurements can compensate for the lack of spatial information and ensure uniqueness of the solution (\cite{Bal2016}). However, solving steady-state RTE is computationally demanding, and acquiring angularly resolved measurements is technically challenging in practice, both of which limit the feasibility of this approach for real-world biomedical applications. Gong et al. (\cite{Gong2024}) proposed a time-for-space strategy and established a coupled dynamic model with hybrid regularization, proving the theoretical uniqueness of the solution under this framework. Nevertheless, in practical applications, the limited diversity of boundary observations and the rapid temporal decay of solutions to parabolic governing equations severely restrict the numerical accuracy of source reconstruction. Consequently, although the location and coarse shape of high-dimensional sources can often be recovered, precise delineation of the source region and accurate recovery of the intensity distributions remain challenging.

The second strategy does not seek full quantitative reconstruction of the source; rather, it focuses on identifying key features, such as location (e.g., centroid) or geometric characteristics (e.g., shape or volume). This approach improves the identifiability of inverse problems by reducing the number of unknowns. Recently, various methods have been proposed to recover the spatial position and morphological information of sources using steady-state BLT models (\cite{Gao2017, Guo2020, Liu2022, Chen2022, Chu2023}).

The third strategy seeks to mitigate non-uniqueness by incorporating a priori information about the unknown source, thereby reducing the effective degrees of freedom in the reconstruction. Such a priori information often comprises source structural assumptions and permissible source regions (PSR). For instance, uniqueness can be established when the source is modeled as a single point or as a linear combination of point sources (\cite{WangG2004, Jiang2007}). Moreover, if the source is represented as $q = \phi \chi_{\Omega_0}$ with a known support $\Omega_0$, then the intensity function $\phi$ can be well reconstructed. The reconstruction accuracy is closely related to how well $\Omega_0$ approximates the true support. PSR can often be estimated using complementary imaging modalities, such as computed tomography (CT) or magnetic resonance imaging (MRI) (\cite{Klose2010, Zhang2014, Deng2020, Yin2022}). Recently, Ding et al. (\cite{Ding2024}) extended these results by proving the uniqueness of source regions with piecewise constant intensities in more general geometries, including domains with $C^2$ boundaries and polygonal domains, thereby generalizing earlier uniqueness results for point sources (\cite{WangG2004}).

Motivated by these unique results, subsequent studies developed shape optimization methods that focus on reconstructing the source region(\cite{Fan2023, Afraites2022, Charkaoui2021, GongWei2025}). Gong et al. (\cite{GongWei2025}) proposed an optimization model that treats the source region as the sole control variable. The key idea is to eliminate the source intensity by expressing the Tikhonov-regularized intensity in terms of the adjoint state for a given source region, thereby reducing the problem to a shape optimization model involving only the source region. Note that when the intensity is represented by the adjoint variable, the choice of regularization parameter becomes critical.

Gong et al. introduced the coupled complex boundary method (CCBM) and its parameter-dependent variant for elliptic inverse source problems (\cite{Gong2016ccbm, Gong2016rte, Cheng2014, Cheng2015, Gong2020}). The CCBM offers several advantages. In particular, its parameter-dependent form obviates the need to select a regularization parameter (\cite{Gong2016ccbm, Gong2020}), thereby simplifying the optimization process. Rabago et al. (\cite{Rabago2025, Rabago2024}) combined the CCBM with shape optimization and achieved notable results in free boundary and obstacle detection problems. Afraites et al. (\cite{Afraites2022ccbmiop, Afraites2022ccbmisp}) applied the CCBM to geometric inverse source and obstacle problems, demonstrating its advantages over classical methods such as Kohn–Vogelius and least-squares approaches. More recently, Hrizi et al. (\cite{Hrizi2025}) extended the CCBM to time-fractional subdiffusion equations.

In this study, we integrate the CCBM with shape optimization techniques for the steady-state DA-based BLT problem. By leveraging the advantage of CCBM in obviating the selection of a regularization parameter, we seek to mitigate the sensitivity of intensity reconstruction to uncertainties in region identification. This approach is expected to enable unique reconstruction of the source region and, consequently, improve the overall imaging quality.

The main contributions of this study are summarized as follows:
\begin{enumerate}[(1)] \vspace{-0.2cm}
	\item By introducing an adjoint variable, the source support is decoupled from its intensity, thereby reducing the reconstruction problem for the bioluminescent source to a shape optimization problem involving only the source region. \vspace{-0.2cm}
	\item We employ a parameter-dependent CCBM algorithm that obviates the need to select a regularization parameter, resulting in more robust and stable numerical reconstructions. \vspace{-0.2cm}
	\item For piecewise constant nested sources, unlike \cite{Ding2024}, which relies on a priori knowledge of the source geometry and iteratively adjusts only a few prescribed parameters (e.g., centroid and radius), our method updates the geometry dynamically through a level-set representation, thereby achieving a genuine and accurate reconstruction of the source region. 
\end{enumerate}\vspace{-0.2cm}

The remainder of this paper is organized as follows. Section~\ref{sec:model} reformulates the parameter-dependent CCBM-based inverse source problem in BLT into a shape optimization problem involving only the source region and establishes the well-posedness of the resulting formulation. Section~\ref{sec:shape} presents the analysis of the shape differentiability of the state system with respect to domain perturbations and provides a detailed sensitivity analysis. Section~\ref{sec:algorithm} presents a two-step reconstruction algorithm that integrates a level set based geometric update with a parameter-dependent CCBM scheme for source intensity recovery. Section~\ref{sec:experiment} reports numerical experiments that demonstrate the effectiveness, robustness, and practical applicability of the proposed approach. Finally, concluding remarks are presented in Section~\ref{sec:concl}. Throughout this study, we adopt the standard notation for Sobolev spaces and their associated norms.

\section{Reconstruction via a shape optimization model} \label{sec:model}
We begin by introducing notations for the function spaces and stating assumptions regarding the problem data. For any set $G$ (e.g., $\Omega$, $\Gamma$, or $\Omega_0$), let $W^{m,s}(G)$ denote the standard real-valued Sobolev space with norm $\| \cdot \|_{m,s,G}$, and set $W^{0,s}(G) := L^s(G)$. In particular, $H^m(G)$ denotes $W^{m,2}(G)$, equipped with the inner product $(\cdot, \cdot)_{m,G}$ and the norm $\| \cdot \|_{m,G}$. The corresponding complex-valued space is denoted by $\mathbf{H}^m(G)$; its inner product is defined by $((u, v))_{m,G} := (u, \bar{v})_{m,G}$ and its norm by $\interleave u \interleave_{m,G}^2 := ((u, u))_{m,G}$. The source intensity $\phi$ is sought in an admissible set $Q_{\mathrm{ad}} \subset L^2(\Omega)$, which is assumed to be non-empty, closed, and convex. The domain $\Omega \subset \mathbb{R}^d$ ($d \leq 3$) is assumed to be open, bounded, and to have a Lipschitz boundary $\Gamma$. We assume that the coefficients and data satisfy the following conditions: $D \in L^\infty(\Omega)$ with $D \geq D_0 > 0$ a.e. in $\Omega$; $\mu_a \in L^\infty(\Omega)$ with $\mu_a \geq 0$ a.e. in $\Omega$; $A(x) \in [A_l, A_u]$ for constants $0 < A_l < A_u < \infty$; and $g_1, g_2 \in L^2(\Gamma)$. Throughout this paper, $C$ denotes a generic positive constant whose value may change from line to line.

\subsection{The uniqueness results and a shape optimization method}\label{subsec:blt_model}
The BLT problem aims to quantitatively reconstruct the spatial distribution of bioluminescent sources inside a small animal by using optical signals measured on the body surface. Owing to the strong scattering nature of biological tissues and limited boundary observations, the BLT inverse problem is inherently ill-posed. More precisely, the non-uniqueness of solutions may be characterized as follows:

\begin{proposition}[\cite{WangG2004}]
	Suppose Problem~\ref{prob:blt} admits a solution. Then there exists a representative solution $q_{H}$ of the minimal $L^2$ norm, and every solution $q$ can be written as:
	\begin{equation*}
		q = q_{H} - \Delta m + m \quad \forall m \in H^2_0(\Omega),
	\end{equation*}
	where $H^2_0(\Omega)$ is the closure of all smooth functions in $\Omega$ and vanishes on $\Gamma$ up to order one.
\end{proposition}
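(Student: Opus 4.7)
The plan is to establish the result in two stages: first, the existence of a minimum $L^2$ norm representative $q_H$; second, the parametrization of the full solution set as $q_H + K$ with the ``redundancy space'' $K=\{-\Delta m + m : m \in H^2_0(\Omega)\}$ (using the simplified coefficients $D\equiv 1$, $\mu_a\equiv 1$ under which the statement is phrased; the general case is analogous with $K=\{-\mathrm{div}(D\nabla m)+\mu_a m : m\in H^2_0(\Omega)\}$).

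For the first stage, let $S\subset L^2(\Omega)$ denote the set of source functions $q$ for which Problem~\ref{prob:blt} is solvable with the prescribed boundary data $g$. Since the forward-to-measurement map $q\mapsto(-D\partial_n u)|_{\Gamma_0}$ is continuous and affine-linear in $q$, the preimage $S$ is a closed affine subspace of $L^2(\Omega)$, nonempty by hypothesis. The Hilbert space projection theorem then yields a unique element of $S$ of minimum norm, which I would designate as $q_H$.

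For the second stage, I would first establish the inclusion $q_H + K \subset S$: given $m\in H^2_0(\Omega)$ and letting $u_H$ denote the state generated by $q_H$, set $u = u_H + m$. Using that $m$ and $\partial_n m$ vanish on $\Gamma$, one checks directly that $u$ solves the forward Robin problem with source $q_H-\Delta m + m$ and that $-D\partial_n u = -D\partial_n u_H$ on $\Gamma$, so the measured flux is unchanged. Conversely, to show $S\subset q_H+K$, I would take $q\in S$ with state $u$ and set $w = u-u_H$. Then $w$ satisfies $-\Delta w + w = q-q_H$ in $\Omega$, together with the Robin identity $w + 2AD\partial_n w = 0$ and the vanishing flux condition $-D\partial_n w = 0$ on $\Gamma$. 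Combining these two boundary identities forces both $w|_\Gamma = 0$ and $\partial_n w|_\Gamma = 0$, so $w$ has vanishing Cauchy data; interior elliptic regularity then promotes $w$ to $H^2(\Omega)$, and the vanishing traces place it in $H^2_0(\Omega)$. Setting $m = w$ produces the required representation $q = q_H - \Delta m + m$.

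The main technical obstacle is justifying that $w$ indeed lies in $H^2_0(\Omega)$ rather than only in a weaker space. For a merely Lipschitz boundary $\Gamma$ and $L^\infty$ coefficients, global $H^2$ regularity of the Robin problem is not automatic, so one must either invoke additional smoothness of $\Gamma$, interpret the Cauchy data weakly in $H^{1/2}(\Gamma)\times H^{-1/2}(\Gamma)$ and exploit the definition of $H^2_0(\Omega)$ as a closure of smooth functions with vanishing zeroth- and first-order traces, or appeal to unique continuation to extend $w$ by zero across $\Gamma$. Once this regularity point is cleared, the two inclusions combine to complete the proof.
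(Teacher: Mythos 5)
The paper itself offers no proof of this proposition --- it is quoted from \cite{WangG2004} --- so there is no in-paper argument to compare against; your proposal must stand on its own, and it essentially does. Stage one is sound: the source-to-flux map is linear and continuous from $L^2(\Omega)$ to $L^2(\Gamma)$ (note that the Robin condition with $g^-\equiv 0$ gives $-D\partial_n u = u/(2A)$ on $\Gamma$, so the flux is read off the Dirichlet trace), hence the solution set is a nonempty closed affine subspace and the projection theorem applies. The two inclusions in stage two are also the right decomposition. On the one step you flag as the main obstacle: the clean resolution is your ``extend by zero'' option, but the mechanism is not unique continuation. Since $w\in H^1_0(\Omega)$ has $-\Delta w + w = q-q_H\in L^2(\Omega)$ and vanishing weak normal derivative, Green's identity shows that the zero extension $\tilde w$ satisfies $-\Delta\tilde w+\tilde w=\widetilde{q-q_H}\in L^2(\mathbb{R}^d)$ in the sense of distributions (the boundary terms drop precisely because the Cauchy data vanish); the Fourier transform then gives $\tilde w\in H^2(\mathbb{R}^d)$ with support in $\bar\Omega$, and for a Lipschitz domain such functions restrict exactly to elements of $H^2_0(\Omega)$. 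No extra boundary smoothness is required. Two smaller points: the formula $-\Delta m+m$ tacitly takes $D\equiv 1$, $\mu_a\equiv 1$, as you correctly observe --- for genuinely variable $L^\infty$ coefficients the interior $H^2$ regularity you invoke would fail, so the general version must carry the operator $-\mathrm{div}(D\nabla\cdot)+\mu_a$ and correspondingly adjusted hypotheses; and the ``$\forall m$'' in the statement should be read as ``for some $m$'' (i.e., the solution set equals $q_H$ plus the stated kernel), which is the reading you adopt.
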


To address the non-uniqueness in the general setting, we restrict the class of admissible sources to a parametric form. This restriction enables the uniqueness to be established within a specific framework. By introducing a finite number of parameters to characterize the source, the inverse problem is reduced to a tractable subset of distributions for which uniqueness can be proved. Under this parameterization, the source is represented by
\begin{equation}
	q = \phi \chi_{\Omega_0},
	\label{phi_para}
\end{equation}
where $\Omega_0 \subset \Omega$ denotes the support of the source, $\phi$ is the intensity, and $\chi_{\Omega_0}$ is the characteristic function of $\Omega_0$. Note that $\Omega_0$ may consist of a union of several disjoint subdomains of $\Omega$. 

When we restrict attention to the source distributions of form \eqref{phi_para}, the uniqueness of the solution can be established.
\begin{proposition}[\cite{Ding2024}]\label{prop:unique}
	Let $\Omega \subset \mathbb{R}^d$ ($d = 2,3$) be a bounded open set and $\Omega \setminus \Omega_0$ is connected. Suppose $q \in L^2(\Omega)$ solves Problem~\ref{prob:blt} and admits the representation $q = \phi \chi_{\Omega_0}$, with $\Omega_0 \Subset \Omega$ a bounded open subset. Assuming either \vspace{-0.2cm}
	\begin{enumerate}[(a)]
		\item $\partial\Omega_0 \in C^2$, $\phi \in C^1(\bar{\Omega}_0)$, and $\phi \neq 0$ on $\partial\Omega_0$; or \vspace{-0.2cm}
		\item $\Omega_0$ is a convex polygon (or polyhedron) with corners $x_c$, $\phi$ is $C^\gamma(\bar{\Omega}_0)$-H\"{o}lder continuous for some $\gamma \in (0,1)$, and $\phi(x_c) \neq 0$ at each corner $x_c$. \vspace{-0.2cm}
	\end{enumerate}
	Then, $q$ is uniquely determined by the single boundary measurement $g$: in case (a), both the smooth domain $\Omega_0$ and the boundary values of $\phi$ on $\partial\Omega_0$ are uniquely determined; in case (b), both the polygonal (polyhedral) domain $\Omega_0$ and corner values $\phi(x_c)$ are uniquely determined. 
	In particular, if $\phi$ is constant, both $\Omega_0$ and $\phi$ are uniquely determined from the single boundary measurement~$g$.
\end{proposition}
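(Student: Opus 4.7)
My plan is to argue by contradiction via the Isakov--Friedman--Vogelius singular-solution strategy. Suppose $(\Omega_1,\phi_1)$ and $(\Omega_2,\phi_2)$ are admissible source pairs yielding the same boundary measurement $g$, and let $u_i$ denote the corresponding state. The difference $u:=u_1-u_2$ satisfies the elliptic equation with right-hand side $q_1-q_2$ supported in $\overline{\Omega_1\cup\Omega_2}$; the Robin relation $u+2AD\partial_n u=0$ (shared by both $u_i$) combined with the matched flux $-D\partial_n u_1=-D\partial_n u_2$ on $\Gamma_0=\Gamma$ gives both $u=0$ and $D\partial_n u=0$ on the entire boundary. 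I would then invoke a classical unique continuation theorem (Aronszajn/Carleman in the $L^\infty$-coefficient setting) for the homogeneous equation on $\Omega\setminus\overline{\Omega_1\cup\Omega_2}$ to deduce $u\equiv 0$ on this open set; the connectedness assumption on $\Omega\setminus\Omega_0$, applied to $\Omega_0=\Omega_1\cup\Omega_2$, ensures that the vanishing propagates from a neighborhood of $\Gamma$ to the whole exterior.

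Next I would exploit a duality identity. For any test function $v$ solving the homogeneous adjoint equation $-\mathrm{div}(D\nabla v)+\mu_a v=0$ on a neighborhood of $\overline{\Omega_1\cup\Omega_2}$, Green's identity together with the null Cauchy data of $u$ on $\Gamma$ yields
\begin{equation*}
\int_{\Omega_1}\phi_1\, v\,dx \;=\; \int_{\Omega_2}\phi_2\, v\,dx.
\end{equation*}
To force $\Omega_1=\Omega_2$, I argue by contradiction: without loss of generality, select a regular point $x_0\in\partial\Omega_1\setminus\overline{\Omega_2}$ (swap the indices if $\Omega_1\subset\Omega_2$) and construct a sequence of singular test solutions $v_n$ whose poles $y_n\in\Omega\setminus\overline{\Omega_1\cup\Omega_2}$ approach $x_0$ non-tangentially from outside $\Omega_1$. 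Concretely, $v_n$ would be fundamental solutions of the adjoint operator, patched by a Runge/Lax--Malgrange approximation so that each $v_n$ genuinely solves the adjoint PDE on the needed open set. Since $v_n$ stays uniformly smooth on $\overline{\Omega_2}$, the right-hand side of the identity remains bounded, whereas the left-hand side is asymptotically equivalent to $\phi_1(x_0)\int_{\Omega_1}G(\cdot,y_n)\,dx$ (using continuity of $\phi_1$ and $\phi_1(x_0)\neq 0$), and this quantity blows up at a rate dictated by the $C^2$ geometry of $\partial\Omega_1$ at $x_0$. Passing to the limit produces the contradiction and gives $\Omega_1=\Omega_2$; specializing the identity to test functions concentrated near various boundary points then pins down the trace $\phi_1=\phi_2$ on $\partial\Omega_0$.

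Case (b) follows the same architecture, but the smooth-boundary asymptotics are replaced by a Kondratiev-type analysis of the singular part of the adjoint Green's kernel at a polygonal/polyhedral corner $x_c$: the H\"older regularity of $\phi$ up to $\overline{\Omega_0}$ justifies the pointwise replacement $\phi(x)\mapsto\phi(x_c)$ up to a vanishing remainder in a shrinking neighborhood of $x_c$, and $\phi(x_c)\neq 0$ again drives the divergence of the $\Omega_1$-integral against the bounded $\Omega_2$-integral. The principal obstacle I anticipate is the sharp quantitative analysis of the $v_n$: one must guarantee both that each $v_n$ actually solves the adjoint equation on a fixed neighborhood of $\overline{\Omega_1\cup\Omega_2}$ (the Runge step) and that its blow-up rate at $x_0$ — or, in case (b), at $x_c$, where the singular exponents depend on the opening angle — strictly dominates the uniformly bounded $\Omega_2$-contribution. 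This quantitative comparison, particularly at corners where the Kondratiev exponents must be tracked carefully, is the technical heart of the argument and is precisely where the hypotheses on $\phi$ (smoothness or H\"older regularity, together with non-vanishing on $\partial\Omega_0$ or at the corners) are genuinely used.
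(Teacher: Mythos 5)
The paper does not prove Proposition~\ref{prop:unique}: it is imported verbatim from \cite{Ding2024} and used as a black box to justify the parametrization $q=\phi\chi_{\Omega_0}$. So there is no in-paper proof to compare against; I can only assess your sketch on its own terms. Your overall architecture (zero Cauchy data for the difference $u_1-u_2$, unique continuation in the exterior, the orthogonality identity $\int_{\Omega_1}\phi_1 v=\int_{\Omega_2}\phi_2 v$ against adjoint solutions, then singular test functions concentrating at an exposed boundary point) is the classical Isakov--Friedman--Vogelius route and is a legitimate way to attack case (a).

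There is, however, a concrete quantitative gap at the heart of your contradiction. If $v_n=G(\cdot,y_n)$ is a plain fundamental solution with pole $y_n\to x_0\in\partial\Omega_1\setminus\overline{\Omega_2}$, then the left-hand side $\int_{\Omega_1}\phi_1(x)G(x,y_n)\,dx$ does \emph{not} blow up: the kernels $\log|x-y|$ (in $d=2$) and $|x-y|^{-1}$ (in $d=3$) are locally integrable, so this integral stays uniformly bounded and in fact converges as $y_n\to x_0$. No contradiction is produced. To make the argument work you must use higher-order singularities --- e.g.\ second normal derivatives of the Green function, for which $\int_{\Omega_1}|x-y_n|^{-d}\,dx$ diverges logarithmically as $\mathrm{dist}(y_n,\partial\Omega_1)\to0$ over a $C^2$ boundary --- or replace the blow-up argument by a local regularity/Hopf-type argument: $w=u_1-u_2$ vanishes identically on the exterior side near $x_0$, is $C^{1,\alpha}$ across $\partial\Omega_1$ by elliptic regularity, hence has vanishing Cauchy data on $\partial\Omega_1$ near $x_0$ from inside, which is incompatible with the nonhomogeneous equation there when $\phi_1(x_0)\neq0$. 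Two further points need attention: (i) unique continuation from the outer boundary reaches $\partial\Omega_1\cup\partial\Omega_2$ only through the connected component of $\Omega\setminus\overline{\Omega_1\cup\Omega_2}$ touching $\Gamma$, and connectedness of each $\Omega\setminus\Omega_i$ does not by itself give connectedness of that set, so the topological bookkeeping (and the choice of which exposed boundary point to use) must be done carefully; and (ii) unique continuation requires Lipschitz leading coefficients, not merely $D\in L^\infty$. Finally, for case (b) the Kondratiev asymptotics you invoke are not obviously the mechanism of \cite{Ding2024}; corner uniqueness for source problems is usually obtained via CGO-type exponential solutions showing that ``a source with a corner always radiates,'' and your sketch would need to supply the corresponding exponent comparison explicitly rather than assert it.
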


Given the above unique results, we now solve the BLT problem. For simplicity, we denote $g_1 = -g$ and $g_2 = 2 A g$. Then Problem~\ref{prob:blt} is reduced to the following form:
\begin{problem}\label{prob:blt_para}
	Given $g_1, g_2$ on $\Gamma$, find $\Omega_0$ and $\phi$ such that
	\begin{equation}
		\left \{
		\begin{aligned}
			- {\rm div} (D \nabla u) + \mu_{a} u = \phi \chi_{\Omega_0} & \quad \mathrm{in}\ \Omega,\\[0.5em]
			u = g_2, \quad D \partial_n u= g_1 & \quad \mathrm{on}\ \Gamma.
		\end{aligned}
		\right.
		\label{eq:bvp_reduced}
	\end{equation}
\end{problem} 

In the following, we allow Neumann and Dirichlet data $g_1$ and $g_2$ to be contaminated by random noise of a known level $\delta$. Given a known subregion $\Omega_0 \subset \Omega$ and boundary data $g_1, g_2$, identification of the unknown source $\phi$ becomes relatively straightforward. A common approach is the Tikhonov regularization, which leads to the following formulation:
\begin{equation*}
	\min_{\phi \in Q_{ad}} \tilde{J_0}(\phi, u) = \dfrac{1}{2} \Vert u-g_2 \Vert_{0, \Gamma}^2 + \dfrac{\varepsilon}{2} \Vert \phi \Vert_{0, \Omega_0}^2,
	\label{prob:phi}
\end{equation*} 
subject to
\begin{equation}
	\left \{
	\begin{aligned}
		- \textrm{div} (D \nabla u) + \mu_{a} u &= \phi \chi_{\Omega_0} & \mathrm{in}\ \Omega,\\[0.5em]
		D \partial_n u &= g_1  & \mathrm{on}\ \Gamma,
	\end{aligned}
	\right.
	\label{eq:bvp_phi}
\end{equation}
where $\varepsilon > 0$ is the regularization parameter. This formulation and its variants have been studied extensively (\cite{Cheng2014, Han2006}). Subsequently, the adjoint variable $w \in H^1(\Omega)$ is introduced as follows:
\begin{equation*}
	\left \{
	\begin{aligned}
		- \textrm{div} (D \nabla w) + \mu_{a} w &= 0 & \quad \mathrm{in}\ \Omega,\\[0.5em]
		D \partial_n w &= u-g_2 & \quad \mathrm{on}\ \Gamma.
	\end{aligned}
	\right.
\end{equation*}
Then the first-order optimality condition reads:
\begin{equation}
	\nabla \tilde{J_0} (\phi) = \varepsilon \phi + w = 0 \quad \textrm{in} \ \Omega_0,
	\label{eq:1st_cond}
\end{equation}
which yields the explicit expression $\phi = -\tfrac{1}{\varepsilon} w |_{\Omega_0}$. The well-posedness, finite element discretization, and theoretical analysis of this formulation are detailed in \cite{Han2006}. As indicated by condition \eqref{eq:1st_cond}, once the source region $\Omega_0$ is known, the source intensity $\phi$ can be reconstructed. In practice, however, the region $\Omega_0$ is usually unknown and represents the primary quantity of interest in the reconstruction.

In this study, we aim to recover both the support $\Omega_0$ and the intensity $\phi$ of the source. Building on the foregoing formulation, we consider the following joint optimization problem:
\begin{equation*}
	\min_{\Omega_0 \subset \Omega, \ \phi \in Q_{ad}} \tilde{J_0}(\Omega_0, \phi, u) = \dfrac{1}{2} \| u-g_2 \|_{0, \Gamma}^2 + \dfrac{\varepsilon}{2} \| \phi \|_{0, \Omega_0}^2 \quad
	\textrm{subject to}\ \ \eqref{eq:bvp_phi}.
\end{equation*} 
To eliminate the dependence on the unknown source $\phi$, we substitute the optimality condition $\phi = - \tfrac{1}{\varepsilon} w |_{\Omega_0}$ into the objective functional (\cite{GongWei2025}). This yields the following shape optimization problem:
\begin{equation}
	\min_{\Omega_0 \subset \Omega} \tilde{J_1}(\Omega_0, u, w) =  \frac{1}{2} \Vert u-g_2 \Vert_{0,\Gamma}^2 + \frac{1}{2\varepsilon} \Vert w \Vert_{0,\Omega_0}^2, 
	\label{eq:shape_p}
\end{equation}
subject to
\begin{equation}
	\left \{
	\begin{aligned}
		- \textrm{div} (D \nabla u) + \mu_{a} u &= -\frac{1}{\varepsilon} w \chi_{\Omega_0} & \quad \textrm{in}\ \Omega,\\[0.5em]
		D \partial_n u &= g_1 & \quad \textrm{on}\ \Gamma,\\[0.5em]
		- \textrm{div} (D \nabla w) + \mu_{a} w &= 0 & \quad \textrm{in}\ \Omega,\\[0.5em]
		D \partial_n w &= u-g_2 & \quad \textrm{on}\ \Gamma.
	\end{aligned}
	\right.
	\label{eq:state_system}
\end{equation}
Thus, the control variable $\phi$ is eliminated, and the original optimization problem over $(\phi, \Omega_0)$ with the state variable $u$ is reduced to an equivalent shape optimization problem over $\Omega_0$ with the state and adjoint variables $(u, w)$.

The corresponding theoretical analysis and numerical experiments are presented in detail in \cite{GongWei2025}.

\subsection{A new CCBM-based regularized shape optimization method} \label{subsec:CCBM}

When the source region $\Omega_0$ is prescribed, the source intensity $\phi$ can be determined from the first-order optimality condition \eqref{eq:1st_cond}. It is worth noting that the choice of regularization parameter $\varepsilon$ has a substantial impact on the accuracy of the recovered intensity. As shown in \cite{Gong2016ccbm}, the CCBM approach circumvents the need to select this parameter, which also offers improved robustness and requires weaker regularity assumptions for the data.

In this study, we incorporate the decoupling strategy for the source region and intensity proposed in \cite{GongWei2025} into the CCBM framework, thereby developing a new regularized shape optimization method. Specifically, within the CCBM framework, the source intensity $\phi$ is reconstructed by solving the following optimization problem:
\begin{equation}
	\min_{\phi \in Q_{ad}} J_0(\phi, u) = \dfrac{1}{2} \Vert u_2 \Vert_{0, \Omega}^2 + \dfrac{\varepsilon}{2} \Vert \phi \Vert_{0, \Omega_0}^2
	\label{prob:J_phi}
\end{equation} 
subject to
\begin{equation}
	\left \{
	\begin{aligned}
		- \textrm{div} (D \nabla u) + \mu_{a} u  &= \phi \chi_{\Omega_0} & \quad \textrm{in}\ \Omega,\\[0.5em]
		D \partial_n u + i \alpha u &= g_1 + i \alpha g_2 & \quad \textrm{on}\ \Gamma,
	\end{aligned}
	\right.
	\label{eq:ccbm}
\end{equation}
where $\alpha > 0$ is a parameter and $i$ is the imaginary unit. Because Eqn.\eqref{eq:ccbm} is complex, the standard Lagrange multiplier method cannot be directly applied to derive the optimality conditions. Therefore, we reformulated the problem within a real-valued shape optimization framework. Substituting $u = u_1 + i u_2$ into \eqref{eq:ccbm} and separating the real and imaginary components, we derive a real-valued coupled system as follows:
\begin{equation}
	\left \{
	\begin{aligned}
		- \textrm{div} (D \nabla u_1) + \mu_{a} u_1  &= \phi \chi_{\Omega_0} & \quad \textrm{in}\ \Omega,\\[0.5em]
		D \partial_n u_1 - \alpha u_2 &= g_1 & \quad \textrm{on}\ \Gamma,
	\end{aligned}
	\right.
	\label{eq:ccbm_1}
\end{equation}
and
\begin{equation}
	\left \{
	\begin{aligned}
		- \textrm{div} (D \nabla u_2) + \mu_{a} u_2  &= 0 & \quad \textrm{in}\ \Omega,\\[0.5em]
		D \partial_n u_2 + \alpha u_1 &= \alpha g_2 & \quad \textrm{on}\ \Gamma.
	\end{aligned}
	\right.
	\label{eq:ccbm_2}
\end{equation}
The corresponding weak formulations are given by
\begin{equation}
	0 = (D \nabla u_1, \nabla v)_{\Omega} +  (\mu_a u_1, v)_{\Omega} - (\phi, v)_{\Omega_0} - (g_1 + \alpha u_2, v)_{\Gamma} \triangleq f_1(u_1,u_2,v) \quad \forall v \in H^1(\Omega),
	\label{eq:ccbm_weak_1} 
\end{equation}
\begin{equation}
	0 = (D \nabla u_2, \nabla v)_{\Omega} + (\mu_a u_2, v)_{\Omega} - \alpha (g_2 - u_1, v)_{\Gamma} \triangleq f_2(u_1,u_2,v) \quad \forall v \in H^1(\Omega).
	\label{eq:ccbm_weak_2}
\end{equation}
The associated optimization problem can then be formulated as
\begin{equation}
	\min_{\Omega_0 \in \Omega, \phi \in Q_{ad}} J_0(\Omega_0, \phi, u_1, u_2) = \dfrac{1}{2} \Vert u_2 \Vert_{0, \Omega}^2 + \dfrac{\varepsilon}{2} \Vert \phi \Vert_{0, \Omega_0}^2 
	\quad \textrm{subject to \eqref{eq:ccbm_1} and  \eqref{eq:ccbm_2}}.
	\label{prob:J_omega&phi}
\end{equation}

Next, we construct the Lagrangian functional $\mathcal{L}_1$ by incorporating \eqref{eq:ccbm_weak_1}-\eqref{eq:ccbm_weak_2} into \eqref{prob:J_omega&phi}:
\begin{equation*}
	\mathcal{L}_1(\phi, u_1, u_2, w_1, w_2) = \dfrac{1}{2} \Vert u_2 \Vert_{0, \Omega}^2 + \dfrac{\varepsilon}{2} \Vert \phi \Vert_{0, \Omega_0}^2 + f_1(u_1,u_2,w_1) + f_2(u_1,u_2,w_1),
\end{equation*}
where $w_1$ and $w_2$ are the adjoint variables corresponding to $u_1$ and $u_2$, and serve as Lagrange multipliers. By taking the partial derivatives of $\mathcal{L}_1$ with respect to all variables and setting them to zero, we obtain adjoint equations and first-order optimality conditions. For any $\nu_1 \in L^2(\Omega)$ and $\nu_2 \in H^1(\Omega)$,
\begin{equation*}
	\frac{\partial \mathcal{L}_1}{\partial \phi} \cdot \nu_1 = 0, \quad
	\frac{\partial \mathcal{L}_1}{\partial u_1} \cdot \nu_2 = 0, \quad
	\frac{\partial \mathcal{L}_1}{\partial u_2} \cdot \nu_2 = 0, \quad
	\frac{\partial \mathcal{L}_1}{\partial w_1} \cdot \nu_2 = 0, \quad
	\frac{\partial \mathcal{L}_1}{\partial w_2} \cdot \nu_2 = 0.
\end{equation*}
Consequently, we obtain
\begin{equation}
	\varepsilon (\phi, \nu_1)_{\Omega_0} - (w_1, \nu_1)_{\Omega_0} = 0, \quad \forall \nu_1 \in L^2(\Omega),
	\label{eq:lag_0}
\end{equation}
\begin{equation}
	(D \nabla w_1, \nabla\nu_2)_{\Omega} + (\mu_a w_1, \nu_2)_{\Omega} + \alpha (w_2, \nu_2)_{\Gamma} = 0 \quad \forall \nu_2 \in H^1(\Omega),
	\label{eq:lag_3}
\end{equation}
\begin{equation}
	(D \nabla w_2, \nabla\nu_2)_{\Omega} + (\mu_a w_2, \nu_2)_{\Omega} - \alpha (w_1, \nu_2)_{\Gamma} + \alpha (u_2, \nu_2)_{\Omega}= 0 \quad \forall \nu_2 \in H^1(\Omega),
	\label{eq:lag_4}
\end{equation}
\begin{equation}
	(D \nabla u_1, \nabla\nu_2)_{\Omega} + (\mu_a u_1, \nu_2)_{\Omega} -  (g_1 + \alpha u_2, \nu_2)_{\Gamma} - (\phi, \nu_2)_{\Omega_0} = 0 \quad \forall \nu_2 \in H^1(\Omega),
	\label{eq:lag_1}
\end{equation}
\begin{equation}
	(D \nabla u_2, \nabla\nu_2)_{\Omega} + (\mu_a u_2, \nu_2)_{\Omega} - \alpha (g_2 - u_1, \nu_2)_{\Gamma} = 0 \quad \forall \nu_2 \in H^1(\Omega).
	\label{eq:lag_2}
\end{equation}
Eqns.\eqref{eq:lag_3} and \eqref{eq:lag_4} define the adjoint variables $w_1$ and $w_2$, whereas Eqns.\eqref{eq:lag_1} and \eqref{eq:lag_2} correspond to the weak forms of \eqref{eq:ccbm_1} and \eqref{eq:ccbm_2}, respectively. From Eqn.\eqref{eq:lag_0}, the source intensity can be written as $\phi = \tfrac{1}{\varepsilon} w_1 |_{\Omega_0}$. Substituting this relation into \eqref{prob:J_omega&phi} yields the following shape optimization problem:
\begin{equation}
	\min_{\Omega_0 \subset \Omega} J_1(\Omega_0) =  \dfrac{1}{2} \Vert u_2 \Vert_{0,\Omega}^2 + \dfrac{1}{2\varepsilon} \Vert w_1 \Vert_{0,\Omega_0}^2, 
	\label{eq:shape optimization problem}
\end{equation}
subject to \eqref{eq:lag_3}-\eqref{eq:lag_2} or
\begin{equation}
	\left \{
	\begin{aligned}
		- \textrm{div} (D \nabla u) + \mu_{a} u  &= \frac{1}{\varepsilon} w_1 \chi_{\Omega_0} & \quad \textrm{in}\ \Omega,\\[0.5em]
		D \partial_n u + i \alpha u &= g_1 + i \alpha g_2 & \quad \textrm{on}\ \Gamma,\\[0.5em]
		- \textrm{div} (D \nabla w) + \mu_{a} w &= -i u_2 & \quad \textrm{in}\ \Omega,\\[0.5em]
		D \partial_n w + i \alpha w &= 0 & \quad \textrm{on}\ \Gamma.
	\end{aligned}
	\right.
	\label{eq:couple system}
\end{equation}
The weak formulation of the coupled system \eqref{eq:couple system} is as follows: find  $(u,w) \in \mathbf{H}^1(\Omega) \times \mathbf{H}^1(\Omega)$ such that
\begin{equation}
	\left \{
	\begin{aligned}
		&\int_{\Omega}(D \nabla u \cdot \nabla v + \mu_{a} uv)dx - \dfrac{1}{\varepsilon} \int_{\Omega_0} w_1 v dx + \int_{\Gamma} i \alpha uv ds = \int_{\Gamma} (g_1 + i \alpha g_2)v ds, & \quad \forall v \in \mathbf{H}^1(\Omega),\\[1em]
		&\int_{\Omega}(D \nabla w \cdot \nabla s + \mu_{a} ws)dx + \int_{\Omega} i u_2 s dx + \int_{\Gamma} i \alpha ws ds = 0, & \quad \forall s \in \mathbf{H}^1(\Omega).
	\end{aligned}
	\right.
	\label{eq:couple weak}
\end{equation}
This weak system \eqref{eq:couple weak} represents the first-order optimality conditions of a strictly convex optimization problem for recovering $\phi$ with a prescribed region $\Omega_0$ and given boundary data. It admits a unique solution pair $(u, w) \in \mathbf{H}^1(\Omega) \times \mathbf{H}^1(\Omega)$.

In the next section, we investigate the well-posedness of the resulting shape optimization problem and derive its shape sensitivity analysis.

\subsection{Well-posedness analysis for the new optimization problem} \label{subsec:well-posed}
To establish the stability of the coupled system \eqref{eq:couple system}—a key component for proving the differentiability of the solution with respect to the domain $\Omega_0$—we first introduce appropriate rescaling and reformulate the system in a variational form. This leads to a symmetric saddle-point structure, which facilitates the subsequent well-posedness analysis. Define
\begin{equation*}
	\mathbf{X} := \mathbf{H}^1(\Omega) \times \mathbf{H}^1(\Omega) \quad \textrm{with} \quad \Vert \boldsymbol{v} \Vert := (\interleave v_1 \interleave_{1,\Omega}^2 + \interleave v_2 \interleave_{1,\Omega}^2)^{\frac{1}{2}} \quad \forall \boldsymbol{v}=(v_1,v_2) \in \mathbf{X}.
\end{equation*}
Introduce the bilinear form $\boldsymbol{b}: \mathbf{X} \times \mathbf{X} \to \mathbb{R}$ defined as
\begin{equation*}
	\boldsymbol{b}(\boldsymbol{v},\boldsymbol{w}) := \boldsymbol{a}(\boldsymbol{v},\boldsymbol{w}) + \frac{1}{\sqrt{\varepsilon}} \boldsymbol{c}(\boldsymbol{v},\boldsymbol{w}) + i \alpha \boldsymbol{d}(\boldsymbol{v},\boldsymbol{w}),
\end{equation*}
where
\begin{equation*}
	\begin{aligned}
		&\boldsymbol{a}(\boldsymbol{v},\boldsymbol{w}) := a(v_1, w_1) + a(v_2, w_2), &\forall \boldsymbol{v},\boldsymbol{w} \in \mathbf{X},\\[0.5em]
		&a(u,v) := (D \nabla u, \nabla v) +  (\mu_{a} u,v), &\forall u,v \in \mathbf{H}^1(\Omega),\\[0.5em]
		&\boldsymbol{c}(\boldsymbol{v},\boldsymbol{w}) := (\textrm{Im } v_2, \textrm{Im } w_1)_{\Omega_0} - (v_1, w_2)_{\Omega}, &\forall \boldsymbol{v},\boldsymbol{w} \in \mathbf{X},\\[0.5em]
		&\boldsymbol{d}(\boldsymbol{v},\boldsymbol{w}) := (v_1, w_1)_{\Gamma} + (v_2,w_2)_{\Gamma}, &\forall \boldsymbol{v},\boldsymbol{w} \in \mathbf{X}.
	\end{aligned}
\end{equation*}
Subsequently, the weak formulation \eqref{eq:couple weak} can be rewritten in the following compact form:
\begin{equation}
	\textrm{find } \boldsymbol{x} \in \mathbf{X} \quad \textrm{s.t.} \  \boldsymbol{b}(\boldsymbol{x},\boldsymbol{\phi}) = (g_1 + i \alpha g_2, \phi_1)_{\Gamma}, \qquad \forall \boldsymbol{\phi} \in \mathbf{X},
	\label{weak}
\end{equation}
where $\boldsymbol{x}=(x_1,x_2) \in \mathbf{X}$ with $x_1 = u$ and $x_2 = \frac{1}{\sqrt{\varepsilon}}w$ solving \eqref{eq:couple weak}.

For a given $\phi \in L^2(\Omega_0)$, using the complex version of the Lax-Milgram Lemma, problem (\ref{weak}) has a unique solution $\boldsymbol{x} \in \mathbf{X}$. Moreover, through a similar deduction (\cite{GongWei2025}), we obtain the stability result
\begin{equation}
	\interleave u \interleave_{1,\Omega}+\frac{1}{\sqrt{\varepsilon}} \interleave w \interleave_{1,\Omega} \leq C (\Vert g_1 \Vert_{0,\Gamma} + \alpha \Vert g_2 \Vert_{0,\Gamma}),
	\label{eq:stability_result}
\end{equation}
where the constant $C>0$ is independent of $\alpha$.

In fact, shape optimization problems are known to be inherently ill-posed; in particular, solutions may not exist or may lack uniqueness (\cite{Allaire2021}). A standard approach to address this issue is to augment the objective functional with a regularization term. Accordingly, we introduce the following regularized optimization problem:
\begin{equation}
	\min_{\Omega_0 \subset \Omega} J(\Omega_0) = J_1(\Omega_0)+\lambda \mathcal{P}_{\Omega}(\Omega_0) \quad \textrm{s.t.} \ \eqref{eq:couple system},\label{eq:shape_opti}
\end{equation}
where $\lambda > 0$ is a regularization parameter, and $\mathcal{P}_{\Omega}(\Omega_0)$ denotes the perimeter of $\Omega_0$ in $\Omega$, which is defined as
\begin{equation*}
	\mathcal{P}_{\Omega}(\Omega_0) := \sup_{\phi \in \Phi} \int_{\Omega_0} \textrm{div} \phi, \quad \text{with} \quad
	\Phi := \{\phi \in C_c^1(\Omega, \mathbb{R}^d) : \max_{x \in \Omega} \Vert \phi(x) \Vert \leq 1 \}.
\end{equation*}

The inclusion of the perimeter term $\mathcal{P}_{\Omega}(\Omega_0)$ promotes the regularity of the domain and ensures the existence of an optimal solution to the shape optimization problem.

\begin{theorem}
	The regularized shape identification problem \eqref{eq:shape_opti} admits at least one solution.
\end{theorem}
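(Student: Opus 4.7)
The plan is to apply the direct method of the calculus of variations. Three ingredients are needed: compactness of sets of finite perimeter via the $BV$ embedding, compactness of the state variables via Rellich's theorem together with the stability estimate \eqref{eq:stability_result}, and lower semicontinuity of each term of the cost functional.

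First, I would take a minimizing sequence $\{\Omega_0^n\}\subset\Omega$ with $J(\Omega_0^n)\to \inf J$ and let $(u^n,w^n)$ denote the associated weak solutions of \eqref{eq:couple weak}. Since $\lambda\mathcal{P}_{\Omega}(\Omega_0^n)\le J(\Omega_0^n)\le C$ and $|\Omega_0^n|\le|\Omega|$, the sequence $\{\chi_{\Omega_0^n}\}$ is uniformly bounded in $BV(\Omega)$. By the $BV$ compactness theorem, a subsequence (not relabeled) satisfies $\chi_{\Omega_0^n}\to \chi_{\Omega_0^*}$ in $L^1(\Omega)$ and a.e.\ for some set of finite perimeter $\Omega_0^*\subset\Omega$, together with the lower semicontinuity $\mathcal{P}_{\Omega}(\Omega_0^*)\le\liminf_n\mathcal{P}_{\Omega}(\Omega_0^n)$.

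Next, I would identify the limit state. The stability estimate \eqref{eq:stability_result} yields $\interleave u^n\interleave_{1,\Omega}+\interleave w^n\interleave_{1,\Omega}\le C$, so, up to a further subsequence, $u^n\rightharpoonup u^*$ and $w^n\rightharpoonup w^*$ weakly in $\mathbf{H}^1(\Omega)$, strongly in $L^2(\Omega)$ componentwise by Rellich, and the traces converge strongly in $L^2(\Gamma)$. The only term in \eqref{eq:couple weak} depending on the moving domain is $\frac{1}{\varepsilon}\int_\Omega \chi_{\Omega_0^n}w_1^n v\,dx$. Writing
\begin{equation*}
\Vert \chi_{\Omega_0^n}w_1^n-\chi_{\Omega_0^*}w_1^*\Vert_{0,\Omega} \le \Vert w_1^n-w_1^*\Vert_{0,\Omega}+\Vert(\chi_{\Omega_0^n}-\chi_{\Omega_0^*})w_1^*\Vert_{0,\Omega},
\end{equation*}
the first summand vanishes by the strong $L^2$ convergence of $w_1^n$, and the second by dominated convergence (pointwise convergence of the characteristic functions, dominated by $|w_1^*|\in L^2$). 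Hence $\chi_{\Omega_0^n}w_1^n\to\chi_{\Omega_0^*}w_1^*$ strongly in $L^2(\Omega)$, and passing to the limit termwise in \eqref{eq:couple weak} identifies $(u^*,w^*)$ as the unique weak solution associated with $\Omega_0^*$.

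Finally, the strong convergences give $\Vert u_2^n\Vert_{0,\Omega}^2\to \Vert u_2^*\Vert_{0,\Omega}^2$ and $\Vert w_1^n\Vert_{0,\Omega_0^n}^2=\Vert\chi_{\Omega_0^n}w_1^n\Vert_{0,\Omega}^2\to\Vert w_1^*\Vert_{0,\Omega_0^*}^2$, while the perimeter is lower semicontinuous. Summing yields $J(\Omega_0^*)\le\liminf_n J(\Omega_0^n)=\inf J$, so $\Omega_0^*$ is a minimizer. The main obstacle is the identification of the limit state through the bilinear coupling $\chi_{\Omega_0^n}w_1^n$: weak $\mathbf{H}^1$ convergence of $w_1^n$ alone does not suffice since $\chi_{\Omega_0^n}$ converges only in $L^1$. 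The resolution is to upgrade to strong $L^2$ convergence via Rellich and combine it with a dominated-convergence estimate on the characteristic functions; without this step one cannot close the passage to the limit in the state system on $\Omega_0^*$.
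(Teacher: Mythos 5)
Your proof is correct and follows essentially the same route as the paper: the direct method on the class of finite-perimeter characteristic functions, with BV compactness producing a limiting set and lower semicontinuity of the perimeter closing the argument. The only difference is that the paper's sketch delegates the lower semicontinuity of $J$ to a cited lemma, whereas you explicitly establish the required continuity of the state map via the stability estimate, Rellich, and the dominated-convergence treatment of the coupling term $\chi_{\Omega_0^n} w_1^n$ --- which is precisely the nontrivial step the sketch omits.
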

\begin{proof}[Sketch of the proof]
	We restrict our attention to the case of domains with a finite perimeter $\mathcal{P}_{\Omega}(\Omega_0)$. Define $j(\chi_{\Omega_0}):=J(\Omega_0)$. Take a minimizing sequence $\{\chi_n\} \subset \textrm{Char}(\Omega,M)$ for some constant $M>0$ such that
	\begin{equation*}
		\lim_{n\to\infty} j(\chi_n) = \inf_{\chi \in \textrm{Char}(\Omega,M)} j(\chi),
	\end{equation*} 
	where
	\begin{equation*}
		\textrm{Char}(\Omega,M) = 
		\Big\{ \chi_{\Omega_0} \in L^2(\Omega) \; : \;
		\chi_{\Omega_0}(1-\chi_{\Omega_0}) = 0 \ \text{a.e. in } \Omega, \;
		\mathcal{P}_\Omega(\Omega_0) \leq M
		\Big\}.
	\end{equation*}	
	Owing to the compactness of $\textrm{Char}(\Omega,M)$, there exists a subsequence $\{\chi_{n_k}\}$ that converges weakly to some $\chi_0 \in \textrm{Char}(\Omega,M)$. By the lower semicontinuity of $J$ (\cite{Sokolowski1992}, Lemma 2.6), we obtain
	\begin{equation*}
		j(\chi_0) \leq \liminf_{k \to \infty} j(\chi_{n_k}) = \inf_{\chi \in \textrm{Char}(\Omega,M)} j(\chi),
	\end{equation*} 
	which shows that $\chi_0$ attains a minimum value. $\hfill\square$	
\end{proof}

We now demonstrate that the regularized solution remains uniformly stable with respect to the regularization parameter, similar to Theorem 3.2 of \cite{Gong2016ccbm}.
\begin{proposition} \label{prop:select_alpha}
	Let $\alpha = \mathcal{O}(\sqrt{\varepsilon})$. Then, for any fixed $ \delta \geq 0$, $\phi = \tfrac{1}{\varepsilon} w_1 \chi_{\Omega_0}$ remains uniformly bounded in $ L^2 (\Omega_0)$ with respect to $ \varepsilon $ for a sufficiently small $ \varepsilon > 0$, where $w_1$ is the real part of the weak solution $w \in \mathbf{H}^1(\Omega)$ of the adjoint problem
	\begin{equation}
		\left \{
		\begin{aligned}
			- {\rm div} (D \nabla w) + \mu_{a} w &= -i u_2 & \quad {\rm in}\ \Omega,\\[0.5em]
			D \partial_n w + i \alpha w &= 0 & \quad {\rm on}\ \Gamma.
		\end{aligned}
		\right.
		\label{eq:adjoint_problem}
	\end{equation}
\end{proposition}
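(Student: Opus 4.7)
The plan is to bootstrap the stability estimate \eqref{eq:stability_result} from Subsection~\ref{subsec:well-posed} by exploiting the structural separation of the complex adjoint variable $w = w_1 + i w_2$. The crucial observation is that, after taking real and imaginary parts of \eqref{eq:adjoint_problem}, the equation for $w_1$ is homogeneous in the interior of $\Omega$ and is driven only through its Neumann trace by $\alpha w_2$; this extra factor of $\alpha$ is precisely what cancels the explicit $1/\varepsilon$ appearing in the formula $\phi = \tfrac{1}{\varepsilon}w_1$ once $\alpha$ is tied to $\sqrt{\varepsilon}$.

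First, I would invoke \eqref{eq:stability_result}: substituting $\alpha = \mathcal{O}(\sqrt{\varepsilon})$ gives, for $\varepsilon$ sufficiently small,
$$\interleave w \interleave_{1,\Omega} \leq C\sqrt{\varepsilon}\bigl(\|g_1\|_{0,\Gamma} + \sqrt{\varepsilon}\,\|g_2\|_{0,\Gamma}\bigr) \leq C\sqrt{\varepsilon},$$
and in particular $\|w_2\|_{1,\Omega} \leq C\sqrt{\varepsilon}$. Next, I would decompose \eqref{eq:adjoint_problem} into its real and imaginary parts; the real part yields
$$-\textrm{div}(D\nabla w_1) + \mu_a w_1 = 0 \ \text{in}\ \Omega, \qquad D\partial_n w_1 = \alpha w_2 \ \text{on}\ \Gamma.$$
Testing this weak form against $w_1$ itself and applying the trace inequality gives
$$\int_\Omega \bigl(D|\nabla w_1|^2 + \mu_a w_1^2\bigr)\,dx = \alpha\int_\Gamma w_1 w_2\,ds \leq C\alpha\,\|w_1\|_{1,\Omega}\|w_2\|_{1,\Omega}.$$
Using the $H^1(\Omega)$-coercivity of the left-hand bilinear form and absorbing $\|w_1\|_{1,\Omega}$ on the right via Young's inequality, I conclude the refined estimate $\|w_1\|_{1,\Omega} \leq C\alpha\|w_2\|_{1,\Omega}$.

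Combining the two steps, $\|w_1\|_{1,\Omega} \leq C\alpha \cdot C\sqrt{\varepsilon} \leq C\varepsilon$, and therefore
$$\|\phi\|_{0,\Omega_0} = \tfrac{1}{\varepsilon}\|w_1\|_{0,\Omega_0} \leq \tfrac{1}{\varepsilon}\|w_1\|_{1,\Omega} \leq C,$$
uniformly in $\varepsilon$, which is the desired conclusion.

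The hard part will be establishing the refined estimate $\|w_1\|_{1,\Omega} \leq C\alpha\|w_2\|_{1,\Omega}$: it relies simultaneously on the special Neumann structure of the $w_1$ equation and on $H^1$-coercivity of the underlying bilinear form. If $\mu_a$ is not uniformly bounded away from zero on all of $\Omega$, the coercivity step becomes nontrivial and likely requires invoking a Poincaré--Friedrichs-type inequality together with the boundary trace, or reinstating the dissipative Robin-type contribution inherited from the full complex adjoint system, so that the estimate degenerates neither at $\varepsilon\to 0^+$ nor in regions where $\mu_a$ vanishes.
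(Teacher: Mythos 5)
Your proposal is correct and follows essentially the same route as the paper: both exploit that the real part $w_1$ satisfies a homogeneous interior equation with Neumann data $\alpha w_2$, test it with $w_1$ to gain the extra factor of $\alpha$, and combine this with the $\mathcal{O}(\alpha)$ bound on $w$ (you read it off the coupled stability estimate \eqref{eq:stability_result}, the paper re-derives it via $\Vert u_2\Vert_{1,\Omega}\leq C\alpha$, but the orders agree) to cancel the $1/\varepsilon$. Your closing caveat about coercivity when $\mu_a$ is only nonnegative is a fair point that the paper's proof also glosses over, but it does not change the argument.
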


\begin{proof}
	By \eqref{eq:stability_result} and $\alpha = \mathcal{O}(\sqrt{\varepsilon})$, we have
	\begin{equation*}
		\interleave u \interleave_{1,\Omega} \leq C (\Vert \phi \Vert_{0,\Omega_0} + \Vert g_1 \Vert_{0,\Gamma} + \alpha \Vert g_2 \Vert_{0,\Gamma}) \leq C.
	\end{equation*}
	The weak form of \eqref{eq:ccbm_2} is
	\begin{equation}
		D (\nabla u_2, \nabla v)_{1, \Omega} + \mu_a (u_2, v)_{1, \Omega} = \alpha (g_2-u_1, v)_{0,\Gamma} \quad \forall v \in H^1(\Omega).
		\label{eq:weak_u2}
	\end{equation}
	Taking $v=u_2$ in \eqref{eq:weak_u2}, we get
	\begin{equation}
		\Vert u_2 \Vert_{1, \Omega} \leq C \alpha (\Vert g_2 \Vert_{0, \Gamma} + \Vert u_1 \Vert_{0, \Gamma}) \leq C \alpha.
	\end{equation}
	From the adjoint problem \eqref{eq:adjoint_problem}, we have
	\begin{equation*}
		\interleave w \interleave_{1,\Omega} \leq C \Vert u_2 \Vert_{0,\Omega} \leq C \alpha.
	\end{equation*}
	Then $w_1$ solves
	\begin{equation}
		D (\nabla w_1, \nabla v)_{1, \Omega} + \mu_a (w_1, v)_{1, \Omega} = \alpha (w_2, v)_{0,\Gamma} \quad \forall v \in H^1(\Omega).
		\label{eq:weak_p1}
	\end{equation}
	Taking $v=w_1$ in \eqref{eq:weak_p1}, we get
	\begin{equation}
		\Vert w_1 \Vert_{1, \Omega} \leq C \alpha \Vert w_2 \Vert_{0, \Gamma} \leq C \alpha^2.
	\end{equation}
	Therefore, if $\alpha = \mathcal{O}(\sqrt{\varepsilon})$,
	\begin{equation}
		\Vert \frac{1}{\varepsilon} w_1 \chi_{\Omega_0} \Vert_{0,\Omega_0} = \mathcal{O}(1).
	\end{equation}
	This completes the proof.$\hfill\square$	
\end{proof}

As indicated by Proposition \ref{prop:select_alpha}, the source function can be reliably reconstructed for sufficiently small regularization parameters when $\alpha$ is appropriately chosen. This observation provides practical guidance for selecting $\alpha$ in numerical implementations.

\section{Shape sensitivity analysis}\label{sec:shape}
Based on the theoretical analysis in the previous chapter, this section derives the shape derivative and associated gradient of the objective functional, which forms the basis for the shape optimization algorithm developed later.

\subsection{Shape derivatives}
Shape derivatives quantify the sensitivity of an objective functional to infinitesimal geometric perturbations in the domain. In this work, we adopt the perturbation of identity method, in which the deformation is represented as a first-order perturbation of the identity map generated by a smooth velocity field. This approach yields an explicit mapping between the reference and perturbed domains, thereby permitting a rigorous derivation of both material and shape derivatives. Compared with the velocity method, it avoids mesh regeneration during iterations, reduces computational cost, and improves numerical stability. Hence, it is widely used in PDE-constrained shape optimization problems. For further details on shape calculus, refer to \cite{Allaire2021, Delfour2011, Sokolowski1992}.

In this section, we compute the shape derivative of the shape optimization problem \eqref{eq:shape optimization problem}-\eqref{eq:couple system} using standard techniques from shape sensitivity analysis. First, we introduce some definitions. Let $\Omega$ be an open bounded domain with Lipschitz boundary. Utilizing the perturbation of identity approach, we define the perturbed domain for each $t>0$ as
\begin{equation*}
	\Omega_t:=T_t(\Omega)[\boldsymbol{V}]=(\textrm{Id}+t\boldsymbol{V})(\Omega).
\end{equation*}
where $\boldsymbol{V}$ is a velocity field belonging to
\begin{equation*}
	\textbf{W}_0^{1,\infty}(\Omega) := \left\{ \boldsymbol{V} \in L^\infty (\mathbb{R}^d), \nabla \boldsymbol{V} \in L^\infty (\mathbb{R}^d)^{d \times d}, \boldsymbol{V}|_{\Gamma} = 0 \right\}.
\end{equation*}
The key point in the calculation of the shape derivative $J'(\Omega)$ is the definition of an appropriate notion of derivative for the mapping $\Omega \mapsto u$. For any function $u_t \in \mathbf{H}^1(\Omega_t)$ defined on the perturbed domain $\Omega_t$, and denoted by $u=u_0 \in \mathbf{H}^1(\Omega)$ the corresponding function on the reference domain, we define the material derivative and the shape derivative of $u$ at $x \in \Omega$ as follows:
\begin{equation*}
	\dot{u}(x) = \lim_{t \to 0^{+}} \frac{u_t(T_t(x))-u(x)}{t}, \quad u'(x) = \lim_{t \to 0^{+}} \frac{u_t(x)-u(x)}{t}.
\end{equation*}
If both derivatives exist, the chain rule yields the following relation for any $x \in \Omega$,
\begin{equation}
	u'(x) = \dot{u}(x) - \nabla u(x) \cdot \boldsymbol{V}(x).
	\label{eq:chain_rule}
\end{equation}
\begin{remark}
	The shape and material derivatives represent two complementary perspectives for describing variations induced by domain perturbations. The shape derivative measures the change of a field at a fixed spatial point, and is therefore convenient for sensitivity analysis in shape optimization. In contrast, the material derivative follows the material particles as the domain deforms and is natural for transport problems. When both derivatives exist, they are related by \eqref{eq:chain_rule}, which links spatial and material variations.
\end{remark}

\begin{proposition}\label{thm:existence of weak material derivative}
	Assume $g_1, g_2 \in L^2(\Gamma)$ and that $\Omega_0$ is of class $C^k(k \geq 2)$. Then for any velocity field $\boldsymbol{V} \in C_c^k(\Omega; \mathbb{R}^d)$, the weak material derivative of elliptic system \eqref{eq:couple system} in direction $\boldsymbol{V}$ exists.
\end{proposition}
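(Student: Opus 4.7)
The plan is to apply an implicit function theorem to the weak formulation of the perturbed coupled system, pulled back to the reference domain $\Omega$. Since $\boldsymbol{V} \in C_c^k(\Omega;\mathbb{R}^d)$ has compact support, the perturbation map $T_t = \mathrm{Id} + t\boldsymbol{V}$ fixes $\Gamma$ and, for $t$ in a sufficiently small interval $(-t_0, t_0)$, is a $C^k$-diffeomorphism of $\Omega$ onto itself that carries $\Omega_0$ to $\Omega_0^t := T_t(\Omega_0)$. Let $\boldsymbol{x}_t = (u_t, \tfrac{1}{\sqrt{\varepsilon}} w_t)$ denote the unique solution in $\mathbf{X}$ of the perturbed system on $\Omega$ with source region $\Omega_0^t$, and define the pull-backs $\boldsymbol{x}^t := \boldsymbol{x}_t \circ T_t \in \mathbf{X}$.

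First, I would change variables $y = T_t(x)$ in the identity \eqref{weak} to recast the perturbed problem on the fixed domain: find $\boldsymbol{x}^t \in \mathbf{X}$ such that
\begin{equation*}
\boldsymbol{b}_t(\boldsymbol{x}^t, \boldsymbol{\phi}) = (g_1 + i\alpha g_2, \phi_1)_{\Gamma}, \qquad \forall \boldsymbol{\phi} \in \mathbf{X},
\end{equation*}
where the transported form $\boldsymbol{b}_t$ differs from $\boldsymbol{b}$ only through the Jacobian $I(t) = \det(DT_t)$, the matrix $A(t) = I(t)(DT_t)^{-1}(DT_t)^{-\top}$, and the transported coefficients $D \circ T_t$, $\mu_a \circ T_t$. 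The source indicator pulls back cleanly since $\chi_{\Omega_0^t} \circ T_t = \chi_{\Omega_0}$, and boundary integrals remain unchanged because $T_t|_{\Gamma} = \mathrm{Id}$. Introduce the operator
\begin{equation*}
F: (-t_0, t_0) \times \mathbf{X} \to \mathbf{X}^*, \qquad \langle F(t, \boldsymbol{y}), \boldsymbol{\phi} \rangle := \boldsymbol{b}_t(\boldsymbol{y}, \boldsymbol{\phi}) - (g_1 + i\alpha g_2, \phi_1)_{\Gamma},
\end{equation*}
so that $F(t, \boldsymbol{x}^t) = 0$ for all $t$ and $F(0, \boldsymbol{x}) = 0$ at the reference solution.

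The two hypotheses to verify for the implicit function theorem at $(0, \boldsymbol{x})$ are: (i) $F$ is jointly $C^1$ on a neighborhood of $(0, \boldsymbol{x})$; (ii) $D_{\boldsymbol{y}} F(0, \boldsymbol{x})$ is a linear isomorphism of $\mathbf{X}$ onto $\mathbf{X}^*$. Point (ii) is immediate: since $F$ is affine in $\boldsymbol{y}$, its partial derivative is the bilinear form $\boldsymbol{b}$, whose isomorphism property was established in Section 2.3 via the complex Lax-Milgram lemma together with the stability estimate \eqref{eq:stability_result}. For (i), the standard shape-calculus identities render $I(t)$ polynomial in $t$ and $A(t)$ rational in $t$ with entries in $L^\infty(\Omega)$, while the $C^k$ regularity of $T_t$ (with $k \geq 2$) ensures that $D \circ T_t$ and $\mu_a \circ T_t$ vary continuously differentiably in $t$ when tested against elements of $\mathbf{X}$; combined with affinity in $\boldsymbol{y}$, this yields the required joint $C^1$ regularity of $F$. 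The theorem then produces a $C^1$ curve $t \mapsto \boldsymbol{x}^t$ near $t = 0$, and the weak material derivatives $\dot{u}, \dot{w}$ are identified as the components of $\partial_t \boldsymbol{x}^t|_{t=0} \in \mathbf{X}$.

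The main technical obstacle I anticipate is controlling the difference quotients $(\boldsymbol{b}_t - \boldsymbol{b})/t$ in the operator norm of $\mathcal{L}(\mathbf{X}, \mathbf{X}^*)$ and guaranteeing uniform coercivity of the family $\{\boldsymbol{b}_t\}$ for small $|t|$, so as to preserve well-posedness along the path. Since $I(0) = 1$ and $A(0) = I$, with explicit shape-calculus derivatives $\partial_t I|_{0} = \operatorname{div}\boldsymbol{V}$ and $\partial_t A|_{0} = (\operatorname{div}\boldsymbol{V})I - \nabla\boldsymbol{V} - \nabla\boldsymbol{V}^{\top}$ all belonging to $L^\infty(\Omega)$, a perturbation argument together with the uniform bound in \eqref{eq:stability_result}, whose constant is insensitive to small perturbations of the coefficients, delivers the required uniform inf-sup estimate and closes the argument. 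The smoothness assumption $k \geq 2$ enters precisely here, to ensure that the transported coefficients and the geometric quantities $I(t)$, $A(t)$ are regular enough for these operator-norm estimates to hold.
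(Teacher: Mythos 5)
The paper does not actually prove this proposition --- it states that ``the proof is omitted and a detailed derivation is provided in \cite{GongWei2025}'' --- so there is no in-paper argument to match yours line by line. Judged on its own merits, your proposal follows the standard and correct route: pull the perturbed variational problem back to the reference configuration via $T_t=\mathrm{Id}+t\boldsymbol{V}$, observe that $\chi_{\Omega_0^t}\circ T_t=\chi_{\Omega_0}$ and that the boundary terms are untouched because $\boldsymbol{V}$ vanishes near $\Gamma$, and then differentiate the resulting $t$-dependent family of variational problems. Your choice of the implicit function theorem is a legitimate alternative to the more common difference-quotient argument (uniform coercivity of $\boldsymbol{b}_t$ for small $|t|$, a uniform bound on $(\boldsymbol{x}^t-\boldsymbol{x})/t$, weak compactness, and identification of the limit), which is what the phrase ``\emph{weak} material derivative'' in the statement suggests the cited reference does; if your IFT argument closes, it actually delivers the stronger conclusion that $t\mapsto\boldsymbol{x}^t$ is norm-differentiable in $\mathbf{X}$. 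The isomorphism hypothesis (ii) is indeed covered by the complex Lax--Milgram argument and the stability estimate \eqref{eq:stability_result} from Section 2.3, and the shape-calculus facts about $J(t)$ and $A(t)$ are standard.

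The one step I would push back on is your claim that ``the $C^k$ regularity of $T_t$ (with $k\geq 2$) ensures that $D\circ T_t$ and $\mu_a\circ T_t$ vary continuously differentiably in $t$.'' The regularity of the diffeomorphism cannot compensate for the lack of regularity of the coefficients: under the paper's standing assumption $D,\mu_a\in L^\infty(\Omega)$ only, the map $t\mapsto D\circ T_t$ need not be differentiable (its formal derivative $\nabla D\cdot\boldsymbol{V}$ does not exist), so the joint $C^1$ regularity of $F$ fails as stated. You need either $D,\mu_a\in W^{1,\infty}$ (or constant, as in all of the paper's numerical experiments --- note that the paper's own transported Lagrangian $\bar{\mathcal{L}}$ in \eqref{eq:L_bar} silently writes $D$ and $\mu_a$ without composition with $T_t$, i.e.\ it already assumes they are not transported), or you must add this as a hypothesis. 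With that assumption made explicit, your argument is sound; the role you assign to $k\geq 2$ is then really about the regularity of $\partial\Omega_0$ and of $\boldsymbol{V}$ needed for the subsequent identification of shape derivatives and jump conditions on $\partial\Omega_0$, not about the operator-norm estimates for $\boldsymbol{b}_t$, which only require $\boldsymbol{V}\in W^{1,\infty}$.
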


The proof is omitted and a detailed derivation is provided in \cite{GongWei2025}. Under the conditions specified in Proposition \ref{thm:existence of weak material derivative}, the weak material derivatives $\dot{u}(\Omega; \boldsymbol{V})$ and $\dot{w}(\Omega; \boldsymbol{V})$ are well-defined in the Sobolev space $\mathbf{H}^1(\Omega)$. Moreover, if the products $\nabla u \cdot \boldsymbol{V}$ and $\nabla w \cdot \boldsymbol{V}$ also belong to $\mathbf{H}^1(\Omega)$—which is ensured when $u, w \in \mathbf{H}^2(\Omega)$ or the domain satisfies the assumption of Proposition \ref{prop:unique}—then the shape derivatives $u'$ and $w'$ along the direction of $\boldsymbol{V}$ exist and lie in $\mathbf{H}^1(\Omega)$.

We now derive the forms of $u'(\Omega; \boldsymbol{V})$ and $w'(\Omega; \boldsymbol{V})$. For a function $\psi (t,x) \in C([0,\tau];W_{\textrm{loc}}^{1,1}(\mathbb{R}^d)) \cap C^{1}([0,\tau];L_{\textrm{loc}}^{1}(\mathbb{R}^d))$, we define the function $F_{\boldsymbol{V}}(t):=\int_{\Omega_t} \psi (t,x) dx$. The shape derivative of $F_{\boldsymbol{V}}(t)$ at $t=0$ can then be calculated as
\begin{equation}
	dF(\Omega;\boldsymbol{V}) = dF_{\boldsymbol{V}}(0)=\int_{\Omega} (\psi'(0,x)+\textrm{div}(\psi(0,x) \boldsymbol{V})) dx.
	\label{eq:dF_V(0)}
\end{equation}
Consider the weak formulation of the state system defined in the perturbed domain $\Omega_t$. For any $v_t,s_t \in \mathbf{H}^1(\Omega_t)$,
\begin{equation*}
	\left \{
	\begin{aligned}
		&\int_{\Omega_t}(D\nabla u_t \cdot \nabla v_t + \mu_{a} u_t v_t)dx - \dfrac{1}{\varepsilon} \int_{{\Omega_0}_t} w_{1t} v_t dx + \int_{\Gamma_t} i \alpha u_t v_t ds = \int_{\Gamma_t} (g_1 + i \alpha g_2)v_t ds,\\[1em]
		&\int_{\Omega_t}(D\nabla w_t \cdot \nabla s_t + \mu_{a} w_t s_t)dx + \int_{\Omega_t} i u_{2t} s_t dx + \int_{\Gamma_t} i \alpha w_t s_t ds = 0.
	\end{aligned}
	\right.
	\label{eq:perturbed}
\end{equation*}
Differentiating the above identities with respect to $t$ at $t=0$ and using \eqref{eq:dF_V(0)} yields the following variational identities for $u'$ and $w'$:
\begin{equation*}
	\left \{
	\begin{aligned}
		&\int_{\Omega}(D\nabla u' \cdot \nabla v + \mu_{a} u'v)dx + \int_{\Omega}\textrm{div}((D \nabla u \cdot \nabla v + \mu_{a} uv)\boldsymbol{V})dx - \dfrac{1}{\varepsilon} \int_{\Omega_0} w_1' v dx\\[1em]
		&- \dfrac{1}{\varepsilon} \int_{\Omega_0} \textrm{div}(w_1 v \boldsymbol{V})dx + \int_{\Gamma} i \alpha u'v ds= 0 \qquad \forall v \in \mathbf{H}^1(\Omega), \\[1em]
		&\int_{\Omega}(D \nabla w' \cdot \nabla s + \mu_{a} w's)dx + \int_{\Omega}\textrm{div}((D \nabla w \cdot \nabla s + \mu_{a} ws)\boldsymbol{V})dx + \int_{\Omega} i u_2' s dx \\[1em]
		&+ \int_{\Omega} \textrm{div}(i u_2 s \boldsymbol{V}) dx + \int_{\Gamma} i \alpha w's ds = 0 \qquad \forall s \in \mathbf{H}^1(\Omega).
	\end{aligned}
	\right.
\end{equation*}
Using the divergence theorem, the identities above can be rewritten as
\begin{equation*}
	\left \{
	\begin{aligned}
		&\int_{\Omega}(D \nabla u' \cdot \nabla v + \mu_{a} u'v)ds + \int_{\Gamma}((D \nabla u \cdot \nabla v + \mu_{a} uv)(\boldsymbol{V} \cdot \boldsymbol{n}))dx - \dfrac{1}{\varepsilon} \int_{\Omega_0} w_1' v dx \\[1em]
		&- \dfrac{1}{\varepsilon} \int_{\partial \Omega_0} w_1 v (\boldsymbol{V} \cdot \boldsymbol{n})ds + \int_{\Gamma} i \alpha u'v ds= 0 \qquad \forall v \in \mathbf{H}^1(\Omega),\\[1em]
		&\int_{\Omega}(D \nabla w' \cdot \nabla s + \mu_{a} w's)ds + \int_{\Gamma}(D \nabla w \cdot \nabla s + \mu_{a} ws)(\boldsymbol{V} \cdot \boldsymbol{n})dx + \int_{\Omega} i u_2' s dx \\[1em]
		&+ \int_{\Gamma} i u_2 s (\boldsymbol{V} \cdot \boldsymbol{n}) ds + \int_{\Gamma} i \alpha w's ds = 0 \qquad \forall s \in \mathbf{H}^1(\Omega).
	\end{aligned}
	\right.
\end{equation*}
Since $\boldsymbol{V}|_{\Gamma} = 0$, the terms on $\Gamma$ involving $\boldsymbol{V}$ vanish, and we arrive at
\begin{equation*}
	\left \{
	\begin{aligned}
		&\int_{\Omega}(D \nabla u' \cdot \nabla v + \mu_{a} u'v)dx - \dfrac{1}{\varepsilon} \int_{\Omega_0} w_1' v dx - \dfrac{1}{\varepsilon} \int_{\partial \Omega_0} w_1 v (\boldsymbol{V} \cdot \boldsymbol{n})ds + \int_{\Gamma} i \alpha u'v ds= 0 \  &\forall v \in \mathbf{H}^1(\Omega),\\[1em]
		&\int_{\Omega}(D \nabla w' \cdot \nabla s + \mu_{a} w's)dx + \int_{\Omega} i u_2' s dx + \int_{\Gamma} i \alpha w's ds = 0 \  &\forall s \in \mathbf{H}^1(\Omega).
	\end{aligned}
	\right.
\end{equation*}
Consequently, the shape derivatives $(u',w')$ are characterized as unique solutions to the following coupled boundary value problem:
\begin{equation*}
	\left \{
	\begin{aligned}		
		- \textrm{div} (D \nabla u') + \mu_{a} u' - \frac{1}{\varepsilon}w_1'\chi_{\Omega_0} &= 0, &\textrm{in } \Omega,\\[0.5em]
		- \textrm{div} (D \nabla w') + \mu_{a} w' + i u_2' &= 0, &\textrm{in } \Omega,\\[0.5em]
		[\![\partial_n u']\!] &= \frac{1}{\varepsilon}w_1(\boldsymbol{V} \cdot \boldsymbol{n}), & \textrm{on } \partial \Omega_0,\\[0.5em]		
		D \partial_n u' + i \alpha u' &= 0, &\textrm{on } \Gamma,\\[0.5em]
		D \partial_n w' + i \alpha w' &= 0, &\textrm{on } \Gamma.		
	\end{aligned}
	\right.
\end{equation*}

\begin{remark}
	Although the final expression of the shape derivative $J'(\Omega)$ does not explicitly contain the shape derivatives $u'$ and $w'$, their existence ensures the differentiability of $J(\Omega)$ and justifies the use of adjoint variables to remove $u'$ and $w'$ from the final expression of the shape gradient.
\end{remark}

\subsection{Shape gradient of the objective functional}
We begin by deriving the shape gradient of the objective functional, which plays a central role in driving domain deformations within the optimization procedure. 

In many practical settings, constraints, such as enforcing a prescribed volume, are required, either for physical consistency or to satisfy engineering specifications. To incorporate this requirement into \eqref{eq:shape_opti}, we impose the volume constraint $|\Omega_0| = \gamma_0$, with $0 < \gamma_0 < |\Omega|$, through a penalty formulation, where deviations from the target volume are penalized quadratically. This transforms the constrained problem into an unconstrained optimization problem with an augmented functional, thereby facilitating the computation of shape gradients and the use of gradient-based algorithms.

Moreover, to enhance numerical stability and computational robustness, the perimeter regularization term $\mathcal{P}_{\Omega}(\Omega_0)$ is replaced by an explicit boundary integral over $\partial \Omega_0$. The resulting formulation accounts for both boundary regularization and volume preservation, and is given by
\begin{equation}
	\min_{\Omega_0 \subset \Omega} J(\Omega_0) = \frac{1}{2} \Vert u_2 \Vert_{0,\Omega}^2 + \frac{1}{2\varepsilon} \Vert w_1 \Vert_{0,\Omega_0}^2+\lambda \int_{\partial \Omega_0}ds + \beta(\int_{\Omega} \chi_{\Omega_0}dx-\gamma_0)^2,
	\label{eq:mini_opti}
\end{equation}
where $\lambda > 0$ and $\beta \geq 0$. To derive the shape derivative of Eqn.\eqref{eq:mini_opti}, we introduce the following associated adjoint system. Similar to the derivation in Section \ref{subsec:CCBM}, we incorporate the weak formulation of the coupled state system \eqref{eq:couple weak} into \eqref{eq:shape optimization problem} to construct the Lagrangian functional:
\begin{equation*}
	\begin{aligned}
		\mathcal{L}_2(\Omega_0, u_1, u_2, &w_1, w_2, v_1, v_2, s_1, s_2) = \frac{1}{2} \Vert u_2 \Vert_{0, \Omega}^2 + \frac{1}{2 \varepsilon} \Vert w_1 \Vert_{0, \Omega_0}^2 \\[0.5em] 
		&+ (D \nabla u_1, \nabla v_1)_{\Omega} + (\mu_a u_1, v_1)_{\Omega} - \alpha (u_2, v_1)_{\Gamma} - \frac{1}{\varepsilon} (w_1, v_1)_{\Omega_0} - (g_1, v_1)_{\Gamma} \\
		&+ (D \nabla u_2, \nabla v_2)_{\Omega} + (\mu_a u_2, v_2)_{\Omega} + \alpha (u_1, v_2)_{\Gamma} - \alpha (g_2, v_2)_{\Gamma} \\[0.5em]
		&+ (D \nabla w_1, \nabla s_1)_{\Omega} + (\mu_a w_1, s_1)_{\Omega} + \alpha (w_2, s_1)_{\Gamma} \\[0.5em]
		&+ (D \nabla w_2, \nabla s_2)_{\Omega} + (\mu_a w_2, s_2)_{\Omega} - \alpha (w_1, s_2)_{\Gamma} + (u_2, s_2)_{\Omega}.
	\end{aligned}
\end{equation*}	
Then, taking the partial derivatives of the Lagrangian with respect to all variables and setting them to zero yields the adjoint system.
\begin{equation*}
	\begin{aligned}
		\frac{\partial \mathcal{L}_2}{\partial u_1} \cdot \nu = 0& \quad \Rightarrow \quad
		(D \nabla v_1, \nabla \nu)_{\Omega} + (\mu_a v_1, \nu)_{\Omega} + \alpha (v_2, \nu)_{\Gamma} = 0 \quad \forall \nu \in H^1(\Omega),\\
		\frac{\partial \mathcal{L}_2}{\partial u_2} \cdot \nu = 0& \quad \Rightarrow \quad
		(D \nabla v_2, \nabla \nu)_{\Omega} + (\mu_a v_2, \nu)_{\Omega} - \alpha (v_1, \nu)_{\Gamma} + (u_2+s_2, \nu)_{\Omega}= 0 \quad \forall \nu \in H^1(\Omega), \\
		\frac{\partial \mathcal{L}_2}{\partial w_1} \cdot \nu = 0& \quad \Rightarrow \quad
		(D \nabla s_1, \nabla \nu)_{\Omega} + (\mu_a s_1, \nu)_{\Omega} - \alpha (s_2, \nu)_{\Gamma} + \frac{1}{\varepsilon} (w_1-v_1, \nu)_{\Omega_0} = 0 \quad \forall \nu \in H^1(\Omega),\\
		\frac{\partial \mathcal{L}_2}{\partial w_2} \cdot \nu = 0& \quad \Rightarrow \quad
		(D \nabla s_2, \nabla \nu)_{\Omega} + (\mu_a s_2, \nu)_{\Omega} + \alpha (s_1, \nu)_{\Gamma} = 0 \quad \forall \nu \in H^1(\Omega),\\
		\frac{\partial \mathcal{L}_2}{\partial v_1} \cdot \nu = 0& \Rightarrow \eqref{eq:lag_1}, \quad
		\frac{\partial \mathcal{L}_2}{\partial v_2} \cdot \nu = 0 \Rightarrow \eqref{eq:lag_2},\quad
		\frac{\partial \mathcal{L}_2}{\partial s_1} \cdot \nu = 0 \Rightarrow \eqref{eq:lag_3}, \quad
		\frac{\partial \mathcal{L}_2}{\partial s_2} \cdot \nu = 0 \Rightarrow \eqref{eq:lag_4}.
	\end{aligned}		
\end{equation*}
The adjoint pair $(v,s) \in \mathbf{H}^1(\Omega) \times \mathbf{H}^1(\Omega)$ satisfies the following coupled boundary value problem:
\begin{equation}
	\left \{
	\begin{aligned}		
		- \textrm{div} (D \nabla v) + \mu_{a} v &= -i(u_2+s_2),\ \ &\textrm{in } \Omega,\\[0.5em]
		D \partial_n v + i \alpha v &= 0,\ \ &\textrm{on } \Gamma,\\[0.5em]
		- \textrm{div} (D \nabla s) + \mu_{a} s &= -\frac{1}{\varepsilon} (w_1-v_1) \chi_{\Omega_0},\ \ &\textrm{in } \Omega,\\[0.5em]
		D \partial_n s + i \alpha s &= 0,\ \ &\textrm{on } \Gamma.
	\end{aligned}
	\right.
	\label{eq:adjoint_mini_opti}
\end{equation}
It can be readily verified that the above problem admits a solution $(v,s) = (w_1, 0)$.

For all $t \in [0,\tau]$ and $\xi_1, \xi_2, \xi_3, \xi_4, \eta_1, \eta_2, \eta_3, \eta_4 \in H^1(\Omega_t)$, we can construct a Lagrangian functional as follows (\cite{Delfour2011}):
\begin{equation}
	\begin{aligned}
		&\mathcal{L}(\Omega_t, \xi_1, \xi_2, \xi_3, \xi_4, \eta_1, \eta_2, \eta_3, \eta_4) \\[1em]
		&= \frac{1}{2}\int_{\Omega_t}\xi_2^2dx + \frac{1}{2\varepsilon}\int_{{\Omega_0}_t}\xi_3^2dx + \lambda \int_{\partial {\Omega_0}_t}ds + \beta(\int_{{\Omega_0}_t}dx-\gamma_0)^2 \\[1em]
		&+ \sum_{i=1}^{4} \int_{\Omega_t}(D \nabla \xi_i \cdot \nabla \eta_i + \mu_{a} \xi_i \eta_i)dx - \int_{\Gamma_t} \alpha \xi_2 \eta_1 ds - \dfrac{1}{\varepsilon} \int_{{\Omega_0}_t} \xi_3 \eta_1 dx - \int_{\Gamma_t} g_1 \eta_1 ds \\[1em]
		&+ \int_{\Gamma_t} \alpha \xi_1 \eta_2 ds - \int_{\Gamma_t} \alpha g_2 \eta_2 ds + \int_{\Gamma_t} \xi_4 \eta_3 ds - \int_{\Gamma_t} \xi_3 \eta_4 ds + \int_{\Omega_t} \xi_2 \eta_4 dx.
	\end{aligned}
	\label{eq:Lagran_func}
\end{equation}
By calculating the first-order optimality conditions of the Lagrangian functional \eqref{eq:Lagran_func}, we obtain
\begin{equation*}
	\begin{aligned}
		\frac{\partial \mathcal{L}(\Omega_t, u_{1t}, u_{2t}, w_{1t}, w_{2t}, v_{1t}, v_{2t}, s_{1t}, s_{2t})}{\partial x_t}[\delta x] = 0, \quad \forall\, x \in \{u_{1},u_{2},w_{1},w_{2},v_{1},v_{2},s_{1},s_{2}\},\;
	\end{aligned}
\end{equation*}
for any $\delta u_1, \delta u_2, \delta w_1, \delta w_2, \delta v_1, \delta v_2, \delta s_1, \delta s_2 \in H^1(\Omega_t)$. From these optimality conditions, we deduce that the objective functional $J(\Omega_t)$ can be expressed as a min-max of the Lagrangian functional $\mathcal{L}$ with the saddle point $(u_{1t}, u_{2t}, w_{1t}, w_{2t}, v_{1t}, v_{2t}, s_{1t}, s_{2t})$, that is,
\begin{equation}
	J(\Omega_t) = \min_{(\xi_1, \xi_2, \xi_3, \xi_4)} \max_{(\eta_1, \eta_2, \eta_3, \eta_4)} \mathcal{L}(\Omega_t, \xi_1, \xi_2, \xi_3, \xi_4, \eta_1, \eta_2, \eta_3, \eta_4).
\end{equation}
\begin{remark}
	C\'ea’s method streamlines the computation of shape derivatives by embedding the state and adjoint equations into a unified Lagrangian framework. By treating the optimization task as a saddle-point problem, the method ensures that the Lagrangian is stationary with respect to all the PDE variables. As a result, only its explicit dependence on geometry must be differentiated, making the derivation of shape sensitivities significantly more direct. The adjoint variables naturally emerge as Lagrange multipliers associated with the PDE constraints.
\end{remark}
To remove the time dependence in the underlying function spaces, we parameterize the functions in $H^1(\Omega_t)$ by elements of $H^1(\Omega)$ through the transformation
\begin{equation*}
	H^1(\Omega_t) = \{\xi \circ T_t^{-1}:\xi \in H^1(\Omega)\}.
\end{equation*}
Using this parametrization, the Lagrange functional $\mathcal{L}$ can be formulated as:
\begin{equation}
	\begin{aligned}
		&\bar{\mathcal{L}}(t, \xi_1, \xi_2, \xi_3, \xi_4, \eta_1, \eta_2, \eta_3, \eta_4) \\[1em]
		&= \mathcal{L}(T_t(\Omega)[\boldsymbol{V}], \xi_1 \circ T_t^{-1}, \xi_2 \circ T_t^{-1}, \xi_3 \circ T_t^{-1}, \xi_4 \circ T_t^{-1}, \eta_1 \circ T_t^{-1}, \eta_2 \circ T_t^{-1}, \eta_3 \circ T_t^{-1}, \eta_4 \circ T_t^{-1}),
	\end{aligned}
\end{equation}
with $\xi_1, \xi_2, \xi_3, \xi_4, \eta_1, \eta_2, \eta_3, \eta_4 \in H^1(\Omega)$. Let $(u_1, u_2, w_1, w_2, v_1, v_2, s_1, s_2)$ be the solutions of the state and adjoint systems. Then, using C\'ea's method, we have
\begin{equation}
	\begin{aligned}
		dJ(\Omega; \boldsymbol{V}) = &\min_{(\xi_1, \xi_2, \xi_3, \xi_4)} \max_{(\eta_1, \eta_2, \eta_3, \eta_4)} \partial_t \bar{\mathcal{L}}(\Omega_t, \xi_1, \xi_2, \xi_3, \xi_4, \eta_1, \eta_2, \eta_3, \eta_4)|_{t=0} \\[1em]
		= &\partial_t \bar{\mathcal{L}}(t,u_1, u_2, w_1, w_2, v_1, v_2, s_1, s_2)|_{t=0},
	\end{aligned}
	\label{cea_method}
\end{equation}
for $\boldsymbol{V} \in \mathcal{U}$ with $\mathcal{U} = \{g \in \mathbf{W}^{1,\infty}(\Omega):g|_{\Gamma} = 0\}$. Formal C\'ea's method is rigorous if we can prove the shape differentiability of the state equation with respect to the domain, as in Proposition \ref{thm:existence of weak material derivative}. Next, we calculated the shape derivative of the objective functional. Since $\boldsymbol{V}|_{\Gamma} = 0$, we rewrite the Lagrangian $\bar{\mathcal{L}}$ defined on the perturbed domain $\Omega_t$ onto the fixed domain $\Omega$:
\begin{equation*}
	\begin{aligned}
		&\bar{\mathcal{L}}(t, \xi_1, \xi_2, \xi_3, \xi_4, \eta_1, \eta_2, \eta_3, \eta_4) \\[1em]
		&= \frac{1}{2}\int_{\Omega_t}(\xi_2 \circ T_t^{-1})^2dx + \frac{1}{2\varepsilon}\int_{{\Omega_0}_t}(\xi_3 \circ T_t^{-1})^2dx + \lambda \int_{\partial {\Omega_0}_t}ds + \beta(\int_{{\Omega_0}_t}dx -\gamma_0) \\[1em]
		&+ \sum_{i=1}^{4} \int_{\Omega_t}(D \nabla (\xi_i \circ T_t^{-1}) \cdot \nabla (\eta_i \circ T_t^{-1}) + \mu_{a} (\xi_i \circ T_t^{-1}) (\eta_i \circ T_t^{-1}))dx \\[1em]
		&- \int_{\Gamma_t} \alpha (\xi_2 \circ T_t^{-1}) (\eta_1 \circ T_t^{-1}) ds - \dfrac{1}{\varepsilon} \int_{{\Omega_0}_t} (\xi_3 \circ T_t^{-1}) (\eta_1 \circ T_t^{-1}) dx - \int_{\Gamma_t} g_1 (\eta_1 \circ T_t^{-1}) ds \\[1em]
		&- \int_{\Gamma_t} \alpha g_2 (\eta_2 \circ T_t^{-1}) ds + \int_{\Gamma_t} \alpha (\xi_1 \circ T_t^{-1}) (\eta_2 \circ T_t^{-1}) ds + \int_{\Gamma_t} (\xi_4 \circ T_t^{-1}) (\eta_3 \circ T_t^{-1}) ds \\[1em]
		&- \int_{\Gamma_t} (\xi_3 \circ T_t^{-1}) (\eta_4 \circ T_t^{-1}) ds + \int_{\Omega_t} (\xi_2 \circ T_t^{-1}) (\eta_4 \circ T_t^{-1}) dx.
	\end{aligned}
\end{equation*}
Then, with a simplification, we obtain
\begin{equation}
	\begin{aligned}
		&\bar{\mathcal{L}}(t, \xi_1, \xi_2, \xi_3, \xi_4, \eta_1, \eta_2, \eta_3, \eta_4) \\[1em]
		&= \frac{1}{2}\int_{\Omega} J(t) \xi_2^2 dx + \frac{1}{2\varepsilon}\int_{\Omega_0} J(t) \xi_3^2 dx + \lambda \int_{\partial \Omega_0} M(t) ds + \beta(\int_{\Omega_0} J(t) dx-\gamma_0) \\[1em]
		&+ \sum_{i=1}^{4} \int_{\Omega} A(t)(D\nabla \xi_i \cdot \nabla \eta_i) dx + \int_{\Omega} J(t) (\mu_{a} \xi_i \eta_i) dx - \int_{\Gamma} \alpha \xi_2 \eta_1 ds - \dfrac{1}{\varepsilon} \int_{\Omega_0} J(t) (\xi_3 \eta_1) dx \\[1em]
		&- \int_{\Gamma} g_1 \eta_1 ds + \int_{\Gamma} \alpha \xi_1 \eta_2 ds - \int_{\Gamma} \alpha g_2 \eta_2 ds + \int_{\Gamma} \xi_4 \eta_3 ds - \int_{\Gamma} \xi_3 \eta_4 ds + \int_{\Omega} J(t) \xi_2 \eta_4 dx,
	\end{aligned}
	\label{eq:L_bar}
\end{equation}
where 
\begin{equation*}
	J(t)=|\textrm{det} \mathcal{D} T_t|, \quad A(t)=J(t)(\mathcal{D} T_t)^{-1}\enspace^{*}(\mathcal{D} T_t)^{-1}, \quad M(t)=J(t)|^{*}(\mathcal{D} T_t)^{-1}\boldsymbol{n}|,
\end{equation*}
and $\mathcal{D} T_t$ denotes the Jacobian matrix of $T_t$, $^{*} \mathcal{D} T_t$ denotes the transpose of $\mathcal{D} T_t$.

By calculating the partial derivatives of Eqn.\eqref{eq:L_bar} with respect to $t$, we obtain
\begin{equation}
	\begin{aligned}
		& \partial_t \bar{\mathcal{L}}(t, \xi_1, \xi_2, \xi_3, \xi_4, \eta_1, \eta_2, \eta_3, \eta_4) \\[1em]
		&= \frac{1}{2}\int_{\Omega} J'(t) \xi_2^2 dx + \frac{1}{2\varepsilon}\int_{\Omega_0} J'(t) \xi_3^2 dx + \lambda \int_{\partial \Omega_0} M'(t) ds + \beta \int_{\Omega_0} J'(t) dx\\[1em]
		&+ \sum_{i=1}^{4} \int_{\Omega} A'(t) (D\nabla \xi_i \cdot \nabla \eta_i) dx + \int_{\Omega} J'(t) (\mu_{a} \xi_i \eta_i) dx - \dfrac{1}{\varepsilon} \int_{\Omega_0} J'(t) (\xi_3 \eta_1) dx + \int_{\Omega} J'(t) \xi_2 \eta_4 dx.
	\end{aligned}
\end{equation}
Moreover, the following identities are well known in shape calculus (\cite{Sokolowski1992})
\begin{equation*}
	\begin{aligned}
		A'(0) &= \lim_{t \to 0^+} \frac{A(t)-I}{t} = (\textrm{div}\boldsymbol{V})I - ^{*}\mathcal{D} \boldsymbol{V} - \mathcal{D} \boldsymbol{V} \triangleq \mathcal{E}(\boldsymbol{V}), \\[1em]
		J'(0) &= \lim_{t \to 0^+} \frac{J(t)-1}{t} = \textrm{div}\boldsymbol{V}, \\[1em]
		M'(0) &= \lim_{t \to 0^+} \frac{M(t)-1}{t} = \textrm{div}\boldsymbol{V}  - \mathcal{D} \boldsymbol{V} \boldsymbol{n} \cdot \boldsymbol{n}. \\[1em]
	\end{aligned}
\end{equation*}
Combing these identities with (\ref{cea_method}), we obtain
\begin{equation}
	\begin{aligned}
		dJ(&\Omega; \boldsymbol{V}) = \partial_t \bar{\mathcal{L}}(t,u_1, u_2, w_1, w_2, v_1, v_2, s_1, s_2)|_{t=0} \\[1em]
		&= \frac{1}{2}\int_{\Omega} (\textrm{div}\boldsymbol{V}) u_2^2 dx + \frac{1}{2\varepsilon}\int_{\Omega_0} (\textrm{div}\boldsymbol{V}) w_1^2 dx + \lambda \int_{\partial \Omega_0} (\textrm{div}\boldsymbol{V}  - \mathcal{D} \boldsymbol{V} \boldsymbol{n} \cdot \boldsymbol{n}) ds + \beta \int_{\Omega_0} \textrm{div}\boldsymbol{V} dx\\[1em]
		&+ \int_{\Omega} \mathcal{E}(\boldsymbol{V}) (D\nabla u_1 \cdot \nabla v_1) dx + \int_{\Omega} (\textrm{div}\boldsymbol{V}) (\mu_{a} u_1 v_1) dx - \dfrac{1}{\varepsilon} \int_{\Omega_0} (\textrm{div}\boldsymbol{V}) (w_1 v_1) dx \\[1em]
		&+ \int_{\Omega} \mathcal{E}(\boldsymbol{V}) (D \nabla u_2 \cdot \nabla v_2) dx + \int_{\Omega} (\textrm{div}\boldsymbol{V}) (\mu_{a} u_2 v_2) dx \\[1em]
		&+ \int_{\Omega} \mathcal{E}(\boldsymbol{V}) (D \nabla w_1 \cdot \nabla s_1) dx + \int_{\Omega} (\textrm{div}\boldsymbol{V}) (\mu_{a} w_1 s_1) dx \\[1em]
		&+ \int_{\Omega} \mathcal{E}(\boldsymbol{V}) (D \nabla w_2 \cdot \nabla s_2) dx + \int_{\Omega} (\textrm{div}\boldsymbol{V}) (\mu_{a} w_2 s_2) dx + \int_{\Omega} (\textrm{div}\boldsymbol{V}) (u_2 s_2) dx.
	\end{aligned}	
	\label{shape_derivative_volume_type}
\end{equation}
This yields a distributed form of shape derivative.

\begin{remark}
	Actually, shape derivatives can be represented in two equivalent forms: the distributed form and the surface form. Both forms explicitly involve the perturbation field $\boldsymbol{V}$ without requiring a PDE to be solved for $\boldsymbol{V}$. The distributed form, expressed as a domain integral over $\Omega$, depends on the state variables $u_1, u_2$, the adjoint variables $w_1, w_2$, and the perturbation $\boldsymbol{V}$. In contrast, the surface form requires a higher regularity of the state and adjoint variables to ensure well-defined boundary traces. In practical applications, the distributed form is often preferred due to its improved numerical stability and convenience for theoretical analysis. For these reasons, the analysis in this work is restricted to the distributed form.
\end{remark}

\section{Algorithm}\label{sec:algorithm}
In this section, we develop a two-step algorithm for reconstructing the bioluminescent source. In the first step, the shape of the source region is updated using the level set method, which provides a flexible framework capable of capturing complex geometric variations and topological changes. In the second step, the source intensity is reconstructed via a parameter-dependent CCBM algorithm. This sequential strategy effectively decouples geometric and intensity reconstruction processes, thereby enhancing both the stability and accuracy of the overall inversion.

\subsection{Level set based approach for shape optimization}
To solve the resulting shape optimization problem, we employ the shape gradient descent method. This approach is widely used in shape optimization because of its simplicity and ease of integration with various domain representation and tracking techniques. A crucial component of this approach is determining an appropriate descent direction, where the shape derivative serves as the guiding quantity.

In this work, we adopt the Hilbertian regularization method (\cite{Allaire2021}) to extend and smooth the shape gradient obtained in \eqref{shape_derivative_volume_type}, thereby ensuring numerical stability during the iterative process. The smoothed descent direction is then determined using the standard $H^1$-gradient flow formulation, in which we seek a velocity field $\boldsymbol{V} \in \mathbf{H}_0^1(\Omega)$ that satisfies
\begin{equation}
	(\nabla \boldsymbol{V}, \nabla \boldsymbol{W})+(\boldsymbol{V},\boldsymbol{W}) = -dJ(\Omega;\boldsymbol{W}) \qquad \forall \boldsymbol{W} \in \mathbf{H}_0^1(\Omega).
	\label{eq:hilbert_regular}
\end{equation}

We adopt the level set method to implicitly represent and evolve the domain boundary to efficiently handle large deformations and possible topological changes during the optimization process (\cite{Hu2022}). This approach introduces a level set function whose negative subdomain defines the current shape, thus providing a flexible framework for tracking complex boundary evolutions. For a given domain $ \Omega_0 \subset \Omega $ at a fixed time $ t $, the level set function $ \varphi : [0, \infty) \times \mathbb{R}^{d+1} \to \mathbb{R} $ is defined as:
\begin{equation*}
	\left \{
	\begin{aligned}
		\varphi(t,x)<0 \quad &\textrm{if} \ x \in \Omega_0,\\[0.5em]
		\varphi(t,x)=0 \quad &\textrm{if} \ x \in \partial \Omega_0,\\[0.5em]
		\varphi(t,x)>0 \quad &\textrm{if} \ x \in \Omega \backslash \bar{\Omega}_0.
	\end{aligned}	
	\right.
\end{equation*}
The domain $\Omega_0$ is represented by the negative region of $\varphi$. To update the level set during evolution, consider a point $x(t)$ moving along the flow induced by the velocity field $\boldsymbol{V}$, that is, $\tfrac{dx}{dt} = \boldsymbol{V}(t, x(t))$. By applying the chain rule, we have
\begin{equation*}
	\frac{d}{dt}\varphi(t, x(t)) = \partial_t \varphi(t,x) + \boldsymbol{V}(t,x) \cdot \nabla \varphi(t,x).
\end{equation*}
We then set this total derivative to zero, thereby ensuring that $\varphi(t, x(t))$ remains constant along each trajectory of the velocity field. In particular, the zero level set of $\varphi$, which represents the domain boundary, moves consistently with the velocity field $\boldsymbol{V}$. Consequently, the evolution of the level set function is governed by the following advection-type equation (\cite{Osher1988, Zhu2018}):
\begin{equation}
	\left \{
	\begin{aligned}
		\partial_t \varphi(t,x) + \boldsymbol{V}(t,x) \cdot \nabla \varphi (t,x) = 0 \quad &\textrm{in} \  (0,T) \times \Omega, \\[0.5em]
		\varphi (0,x) = \varphi_0 \quad &\textrm{in} \  \Omega, \\[0.5em]
		\partial_n \varphi(t,x) = 0 \quad &\textrm{on} \ \Gamma.
	\end{aligned}
	\label{eq:level_set_update}
	\right.
\end{equation}
During the iteration, it is important to ensure that the level set function $\varphi$ is neither too flat nor too steep near the boundary, to maintain the numerical stability and accuracy. Notably, the signed distance function $ d(x, \partial \Omega) := \min_{p \in \partial \Omega} |x - p| $ naturally satisfies this requirement. Therefore, the level set function can be periodically reinitialized as a signed distance function, in order to prevent degradation during shape evolution. In this work, we employ the redistancing equation to ensure the numerical stability of the level set method.
\begin{equation}
	\left \{
	\begin{aligned}
		\partial_t \varphi(t,x) + \textrm{sgn} (\varphi) (|\nabla \varphi (t,x)|-1) = 0 \quad &\textrm{in} \ (0,T) \times \Omega,\\[0.5em]
		\varphi (0,x) = \varphi_0 \quad &\textrm{in} \  \Omega.
	\end{aligned}
	\label{eq:level_set_reinitial}
	\right.
\end{equation}
\begin{remark}
	In the context of shape and topology optimization, the time variable in \eqref{eq:level_set_update} and \eqref{eq:level_set_reinitial} is not the “true” time but rather some pseudo-time corresponding to the descent parameter in the minimization of the objective function. Likewise, the time step in the discretization of \eqref{eq:level_set_update} and \eqref{eq:level_set_reinitial} is a descent step.
\end{remark}
Combining the level set representation of domains with the steepest method, we present the shape optimization algorithm as Algorithm \ref{algorithm}.

\begin{algorithm} \label{algorithm}
	\caption{Shape Steepest Descent Method}
	\label{alg:SSDM}
	\renewcommand{\algorithmicrequire}{\textbf{Input:}}
	\renewcommand{\algorithmicensure}{\textbf{Output:}}
	
	\begin{algorithmic}[1]
		\REQUIRE $g_1, g_2, \varepsilon, \beta, \lambda, \Omega_{init}$ 		
		\ENSURE $\varphi, \Omega_0$    
		
		\STATE  $k=0$
		
		\WHILE{$|J(\Omega^{k+1})-J(\Omega^k)| > \tilde{\varepsilon} |J(\Omega^k)|$}
		\STATE solve the state system \eqref{eq:couple weak} by finite element method.
		\STATE obtain the solution of the adjoint system \eqref{eq:adjoint_mini_opti} through the state solution.
		\STATE compute the descent direction by solving the Hilbertian regularization equation \eqref{eq:hilbert_regular}.
		\STATE update the level set function by solving \eqref{eq:level_set_update}.
		\STATE check if reinitialization is needed; if so, reinitialize by \eqref{eq:level_set_reinitial}.
		\STATE $k=k+1$
		\ENDWHILE
		
		\RETURN Outputs
	\end{algorithmic}
\end{algorithm}

\begin{remark}
	We emphasize that our approach differs completely from that presented in \cite{Ding2024}. In the cited work, the authors employed parametric regularization to address the ill-posedness of the problem, utilizing a parametric model of shape variations for numerical implementation. This process involved restricting the domains to simple regular geometries and using Cartesian coordinates for parametrization.	
\end{remark}

\subsection{Reconstruction of source intensity}
We remark that our proposed algorithm can simultaneously recover the support and intensity of the source by setting $\phi(x) = \tfrac{1}{\varepsilon}w_1(x)|_{\Omega_0}$. After completing the shape optimization, a standard inverse source method can be applied to refine the intensity $\phi(x)$ using the recovered support $\Omega_0$. By setting $\alpha = \sqrt{\varepsilon}$, we solve the following linear system:
\begin{equation}
	\left \{
	\begin{aligned}
		&(D \nabla u_1, \nabla v_1)_{\Omega} + (\mu_{a} u_1,v_1)_{\Omega} - \alpha (u_2,v_1)_{\Gamma} - \tfrac{1}{\varepsilon} (w_1, v_1) = (g_1,v_1)_{\Gamma},&\forall \ v_1 \in H^1(\Omega),\\[0.5em]
		&(D \nabla u_2, \nabla v_2)_{\Omega} + (\mu_{a} u_2,v_2)_{\Omega} + \alpha(u_1,v_2)_{\Gamma} = \alpha (g_2,v_2)_{\Gamma}, &\forall \ v_2 \in H^1(\Omega),\\[0.5em]
		&(D \nabla w_1, \nabla s_1)_{\Omega} + (\mu_{a} w_1,s_1)_{\Omega} - \alpha (w_2,s_1)_{\Gamma} - (u_2,s_2)_{\Omega}= 0, &\forall \ s_1 \in H^1(\Omega),\\[0.5em]
		&(D \nabla w_2, \nabla s_2)_{\Omega} + (\mu_{a} w_2,s_2)_{\Omega} + \alpha (w_1,s_2)_{\Gamma} = 0, &\forall \ s_2 \in H^1(\Omega).
	\end{aligned}
	\right . \label{eq:ccbm_intensity}
\end{equation}
Then, by substituting the computed $w_1$ into $\phi(x) = \tfrac{1}{\varepsilon} w_1(x) \chi_{\Omega_0}$, we obtain the reconstructed source intensity $\phi$.

\begin{remark}
	Our algorithm differs from that in \cite{GongWei2025} by employing a two-step strategy to reconstruct the source region and intensity separately, with both steps incorporating the parameter-dependent CCBM method. This sequential approach yields a  significantly more accurate source intensity, as demonstrated by the numerical results in the following section.
\end{remark}

\section{Numerical results} \label{sec:experiment}
In this section, we present numerical experiments to demonstrate the effectiveness and efficiency of the proposed algorithm in a two-dimensional case. The computational domain is fixed as $\Omega = { (x,y) \in \mathbb{R}^2 : |x|<1, |y|<1}$, and all experiments are conducted using the open-source software, NGSolve. Data $g_2$ is generated from the solution of Eqn.\eqref{eq:bvp_reduced} using $g_1$ and the exact source $\phi_*$ on a fine finite element mesh with 41,032 elements and 20,783 vertices. The shape optimization problem is then solved on a coarser mesh with 5,820 elements and 3,011 vertices, where the domain is discretized using Delaunay triangulation and Algorithm \ref{alg:SSDM} is employed to compute the approximate domain $\Omega_\varepsilon$.

The regularization parameter $\varepsilon$, the penalty parameter $\lambda$ for the perimeter constraint, and the penalty parameter $\beta$ for the volume constraint are kept fixed during the first 20 iterations and subsequently reduced with a decay rate of $0.9$. The choice of the regularization parameter $\varepsilon$ plays a crucial role in the regularization strategy. Following Proposition \ref{prop:select_alpha}, we set $\alpha = c \sqrt{\varepsilon}$ with $c = 10^2$.

The noisy measured data are generated as
\begin{equation*}
	g_2^{\delta} = g_2 + \delta g_2(2 {\rm rand} ({\rm size}(g_2))-1),
\end{equation*}
where $\delta$ denotes the noise level, and ${\rm rand}$ represents uniformly distributed random numbers in the interval $[0,1]$.

We define the area error and the intensity error as
\begin{equation*}
	{\rm err} (\Omega_{\varepsilon}) = 1 - \frac{2|\Omega_{\varepsilon} \cap \Omega_*|}{|\Omega_\varepsilon| + |\Omega_*|}, \qquad
	{\rm err} (\phi_\varepsilon) = \left( \int_{\Omega_*} (\phi_\varepsilon-\phi_*)^2 dx \right)^{\frac{1}{2}} / \left( \int_{\Omega_*} \phi_*^2 dx \right)^{\frac{1}{2}},
\end{equation*}
which measure the relative error in the support region and the source intensity, respectively. Here, the subscript $*$ denotes the true value, the subscript $\varepsilon$ denotes the reconstructed value, and $|\cdot|$ denotes the area of a region.

For comparison, we also implemented the method proposed in \cite{GongWei2025} (hereafter referred to as “GZ”). Under identical conditions, we compared the best performances of the two algorithms. Without loss of generality, we set $D = 1$, $\mu_a = 1$, and $A = 0.5$, which is consistent with GZ, and $g_1= D \sin(\pi x) \sin(\pi y)$ on $\Gamma$.

\begin{example}\label{ex:moon}
	In the first example, we take the exact source function $\phi_*=1$. The true domain is given by $\Omega_* = \{(x,y) \in \Omega : 10(x+0.4-y^2)^2+x^2+y^2 <0.5\}$. The initial guess is chosen as $\Omega_{init} = \{(x,y) \in \Omega : x^2+y^2 < 0.04\}$. 
\end{example}

\begin{figure}[htbp]
	\centering
	\subfigure[true source $\Omega_*$ ]{\includegraphics[width=0.3\textwidth]
		{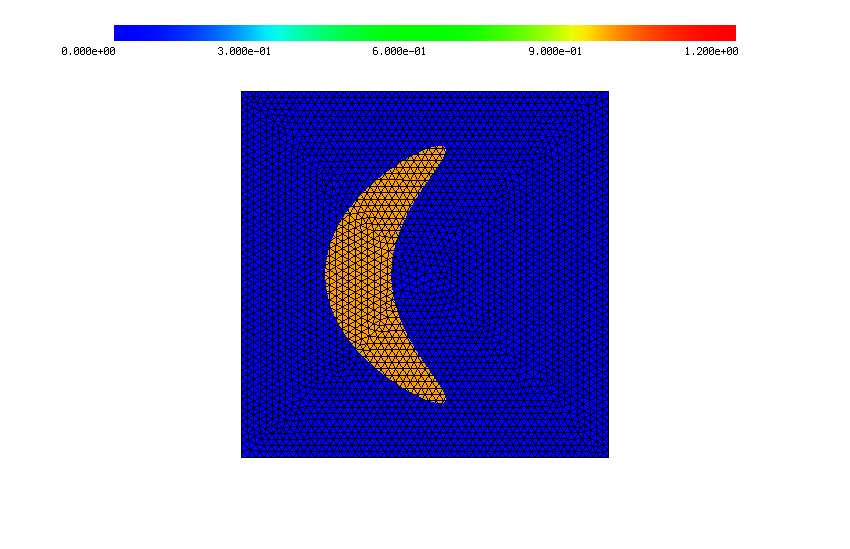}}
	\subfigure[initial domain $\Omega_{init}$]{\includegraphics[width=0.3\textwidth]
		{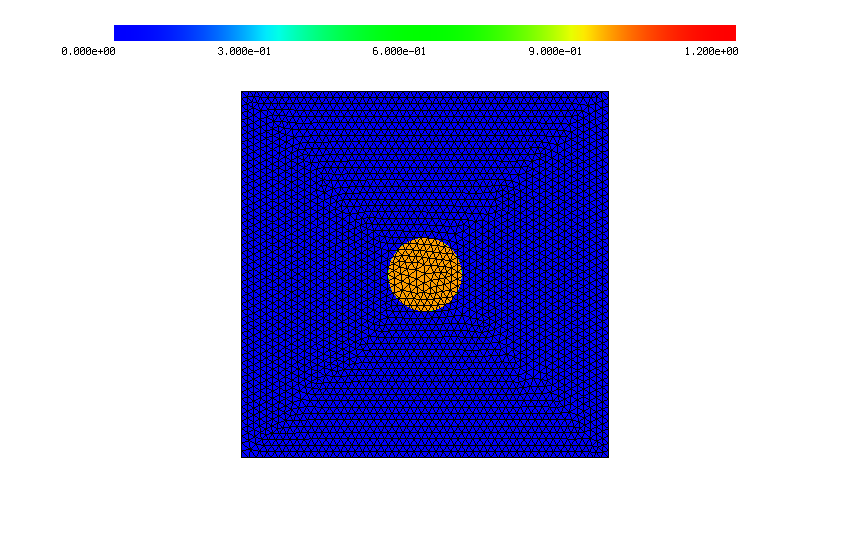}}
	\subfigure[reconstruct without noise]{\includegraphics[width=0.3\textwidth]
		{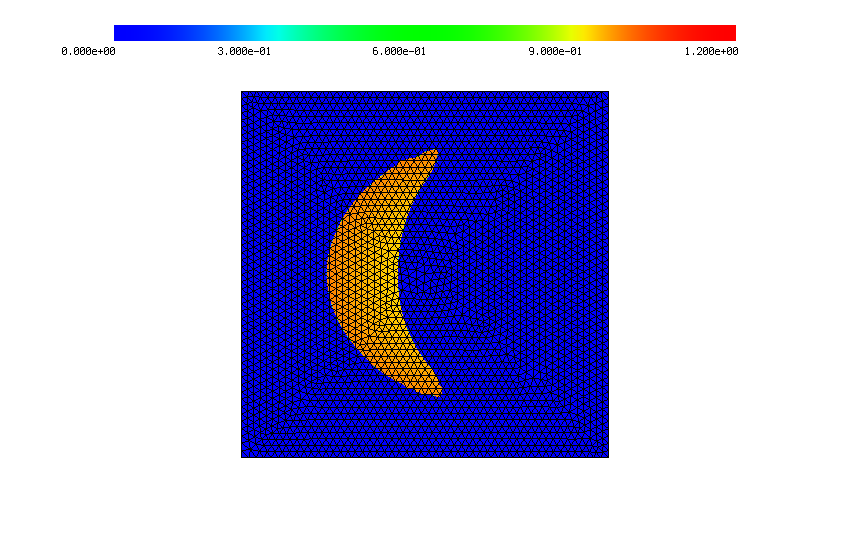}}\\
	\subfigure[reconstruct with 0.01$\%$ noise]{\includegraphics[width=0.3\textwidth]
		{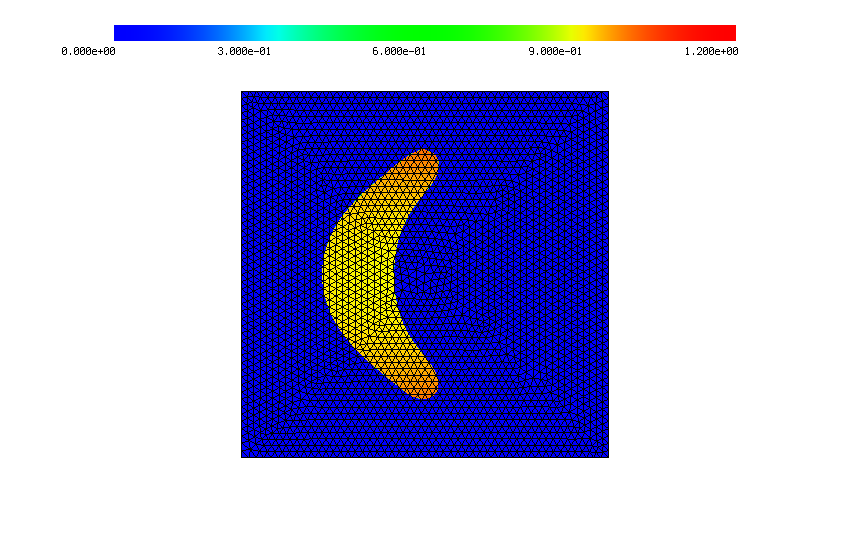}}	
	\subfigure[reconstruct with 0.1$\%$ noise]{\includegraphics[width=0.3\textwidth]
		{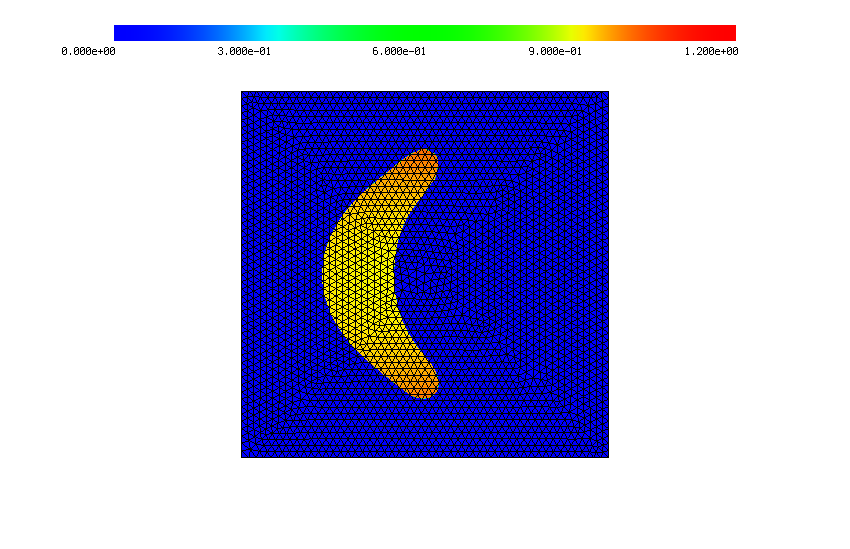}}
	\subfigure[reconstruct with 0.5$\%$ noise]{\includegraphics[width=0.3\textwidth]
		{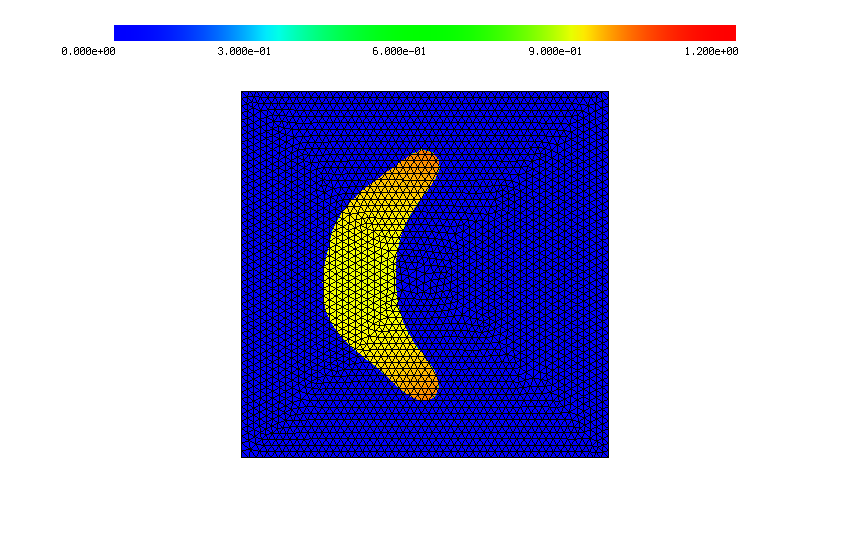}}\\
	\subfigure[reconstruct with 0.01$\%$ noise]{\includegraphics[width=0.3\textwidth]
		{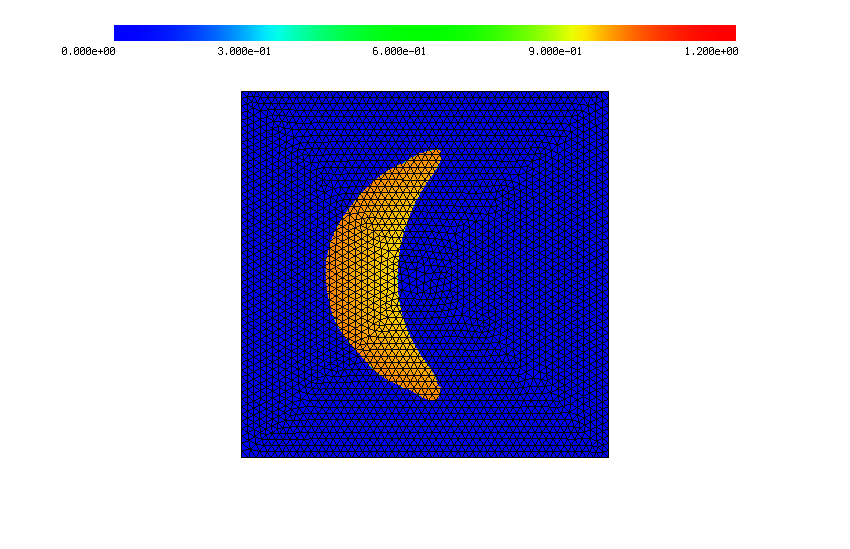}}	
	\subfigure[reconstruct with 0.1$\%$ noise]{\includegraphics[width=0.3\textwidth]
		{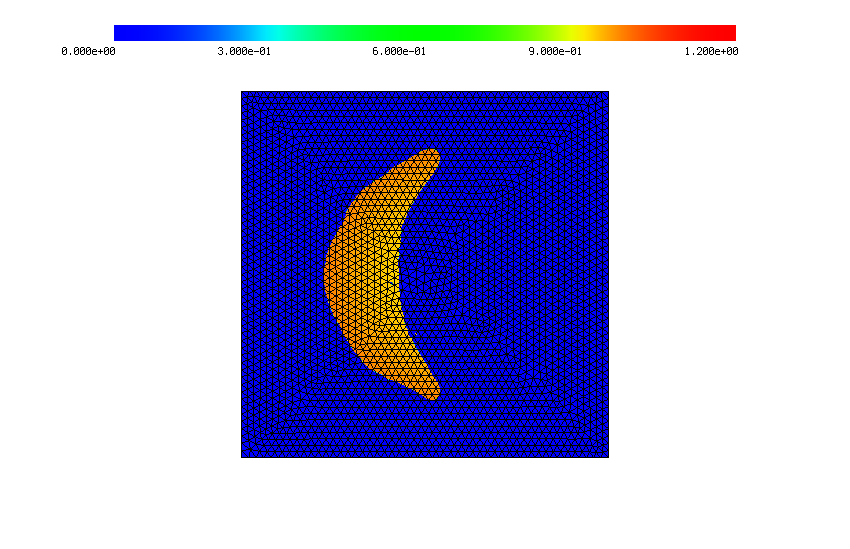}}
	\subfigure[reconstruct with 0.5$\%$ noise]{\includegraphics[width=0.3\textwidth]
		{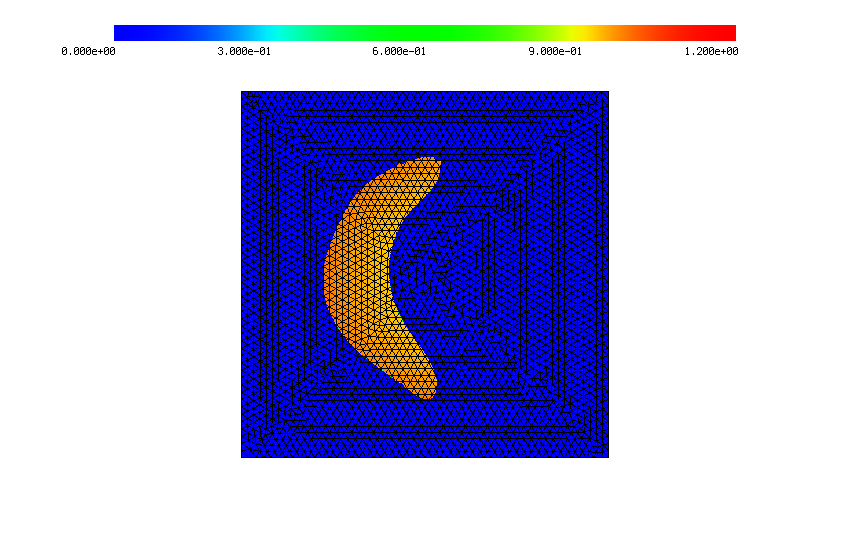}}\\
	\caption{Numerical results for Example \ref{ex:moon} under different noise levels: (d)–(f) GZ method; (g)–(i) our method. The colorbars are uniformly set to [0, 1.2].}
	\label{fig:moon}
\end{figure}

\begin{table}[htbp]
	\caption{Numerical results for Example \ref{ex:moon} with different noise levels.}
	\centering
	\begin{tabular}{lcccc}
		\hline
		Method & Error & $\delta=0.0001$ & $\delta=0.001$ & $\delta=0.005$ \\
		\hline
		GZ & ${\rm err} (\Omega_{\varepsilon})$ & $5.3084 \times 10^{-2}$ & $7.0118 \times 10^{-2}$ & $8.7076 \times 10^{-2}$ \\
		& ${\rm err} (\phi_\varepsilon)$ & $6.4785 \times 10^{-2}$ & $7.5084 \times 10^{-2}$ & $8.5029 \times 10^{-2}$ \\
		\hline
		Ours & ${\rm err} (\Omega_{\varepsilon})$ & $4.1784 \times 10^{-2}$ & $5.2254 \times 10^{-2}$ & $6.1448 \times 10^{-2}$ \\
		& ${\rm err} (\phi_\varepsilon)$ & $2.4354 \times 10^{-2}$ & $2.4357 \times 10^{-2}$ & $2.4360 \times 10^{-2}$ \\
		\hline
	\end{tabular}
	\label{tab:moon}
\end{table}

Table \ref{tab:moon} summarizes the area and intensity errors of the reconstructed source relative to the exact one, obtained by both GZ and our method. Figures \ref{fig:moon} (a) and (c) illustrate that the numerical reconstructions closely approximate the true solution, accurately recovering both the support and intensity of the source.

To evaluate stability under measurement noise, we added uniformly distributed random perturbations to the boundary data $g_2$ with noise levels $\delta = 0.0001, 0.001, 0.005$. The reconstructed domains $\Omega_\varepsilon$ obtained by our method are shown in Figures \ref{fig:moon} (g)–(i), demonstrating high accuracy. For comparison, Figures \ref{fig:moon} (d)–(f) present the results of GZ. Under identical mesh resolution and noise levels, our method yields a sharper reconstruction of the source boundaries, highlighting its superior geometric resolution.

As the noise level increases, the reconstruction error of the source domain also increases. From Table \ref{tab:moon}, the area error of our reconstructions is consistently smaller than that of GZ. Since the accuracy of the domain reconstruction directly affects the intensity reconstruction, the intensity errors obtained by our method are also lower. Owing to the stability and robustness of the parameter-dependent CCBM algorithm employed in computing the source intensity, the increase in intensity error with increasing noise remains moderate.

\begin{example}\label{ex:rect}	
	In the second example, we consider a polygonal domain $\Omega_0$. We set $f=1 \ \textrm{in} \ \Omega$, and the exact source function $\phi_*= 1$. The exact domain $\Omega_*$ is defined as $\{(x,y) \in \Omega : -0.1<x<0.6, 0.1<y<0.4 \}$. We choose the initial domain as $\Omega_{init} = \{(x,y) \in \Omega : x^2+y^2 < 0.04\}$. 
\end{example}

In Table \ref{tab:rect}, we report the relative reconstruction errors, and Figure \ref{fig:rect} displays the recovered support and intensity. As shown in Figures \ref{fig:rect} (a) and (c), the numerical solution matches the true source well, successfully recovering both support and intensity. When uniformly distributed random noise with levels $\delta = 0.0001, 0.001, 0.005$ is added to the boundary data $g_2$, the reconstructions in Figures \ref{fig:rect} (g)–(i) remain stable. For comparison, Figures \ref{fig:rect} (d)–(f) present the results reproduced from GZ. Under identical discretization and noise levels, our method achieves an accuracy comparable to that of GZ.

For polygonal domains, reconstruction algorithms typically experience reduced accuracy near corners, where geometric singularities cause smoothing along the sharp edges. This behavior is also observed in our results: as shown in Figure \ref{fig:rect}, the smooth portions of the domain are accurately recovered, whereas the sharp vertices are only partially reconstructed. Compared with GZ, our method exhibits a similar overall reconstruction quality and slightly improved recovery of corner features.

\begin{figure}[htbp]
	\centering
	\subfigure[true source $\Omega_*$]{\includegraphics[width=0.3\textwidth]		{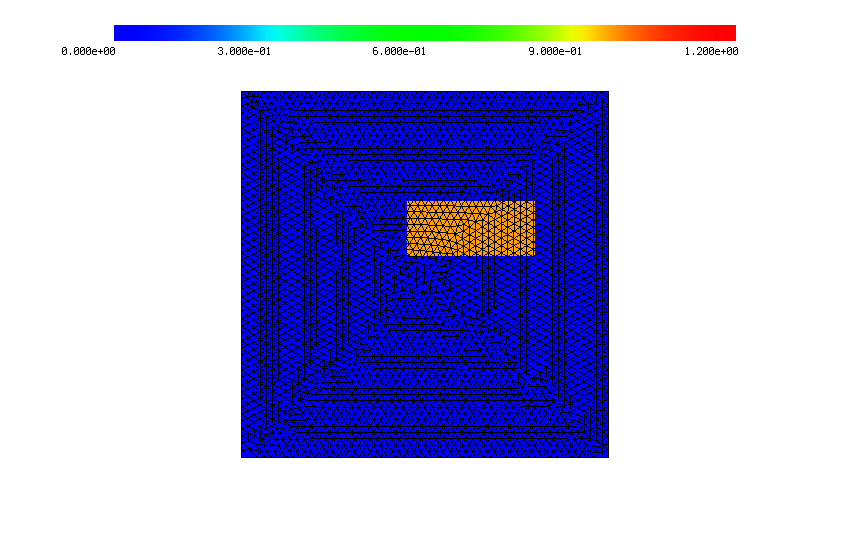}}
	\subfigure[initial domain $\Omega_{init}$]{\includegraphics[width=0.3\textwidth]
		{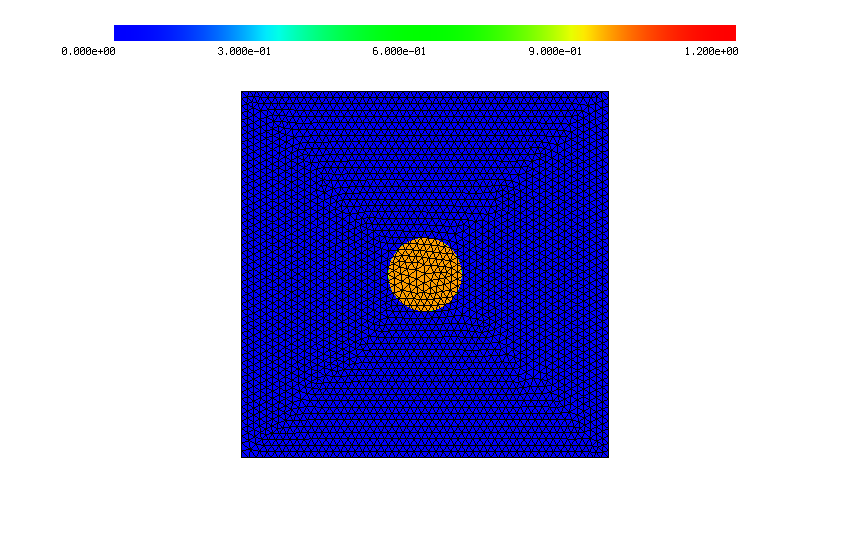}}
	\subfigure[reconstruct without noise]{\includegraphics[width=0.3\textwidth]
		{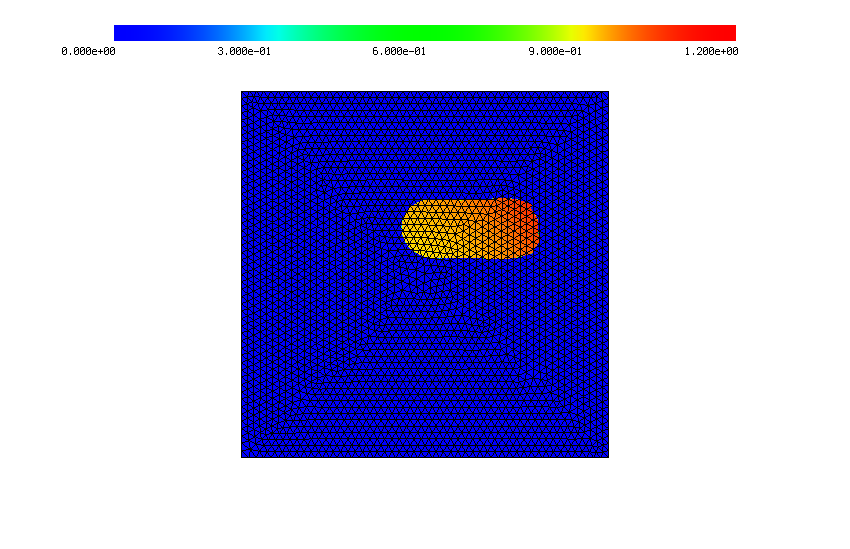}} \\
	\subfigure[reconstruct with 0.01$\%$ noise]{\includegraphics[width=0.3\textwidth]
		{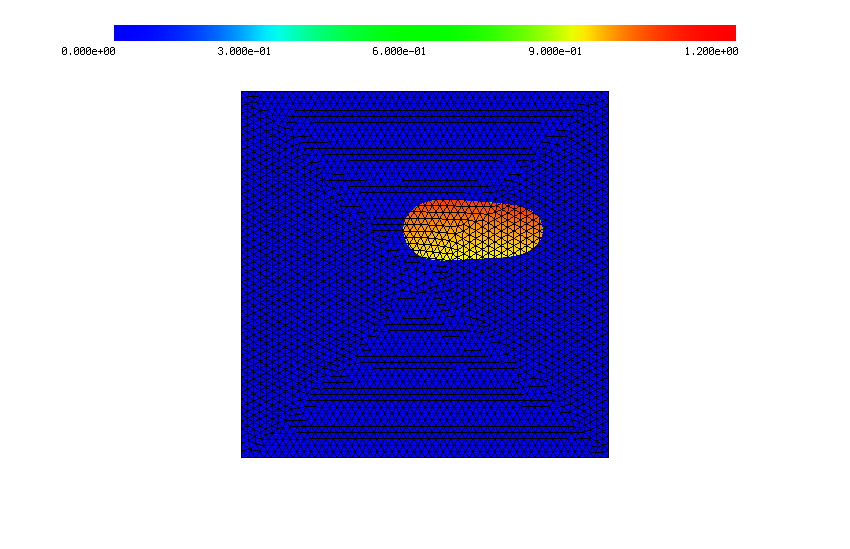}}	
	\subfigure[reconstruct with 0.1$\%$ noise]{\includegraphics[width=0.3\textwidth]
		{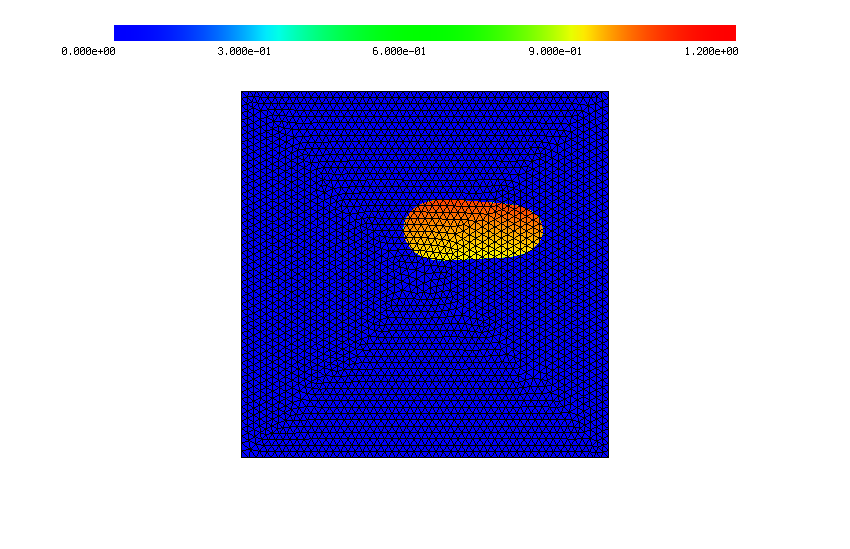}}
	\subfigure[reconstruct with 0.5$\%$ noise]{\includegraphics[width=0.3\textwidth]
		{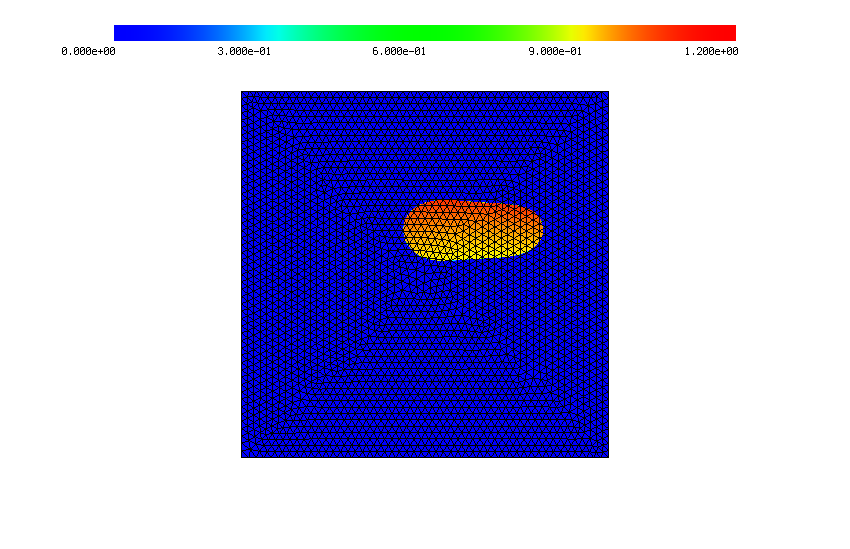}}	\\
	\subfigure[reconstruct with 0.01$\%$ noise]{\includegraphics[width=0.3\textwidth]
		{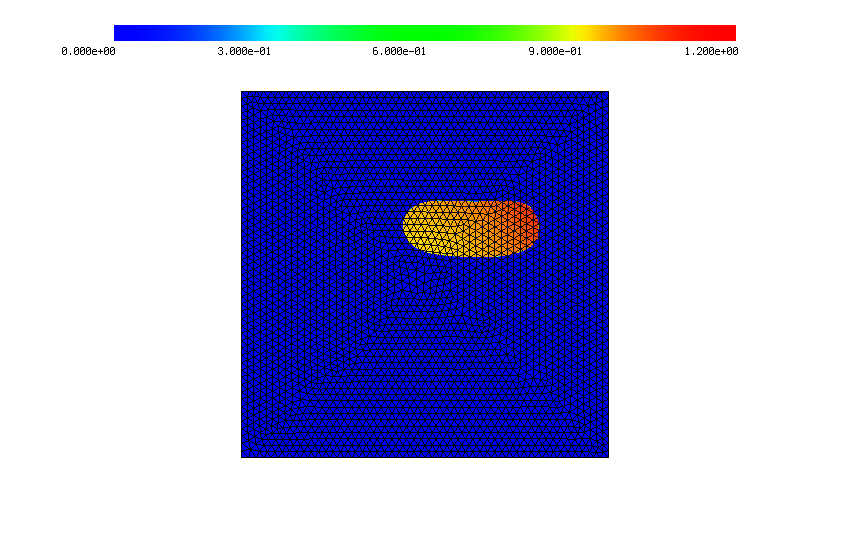}}	
	\subfigure[reconstruct with 0.1$\%$ noise]{\includegraphics[width=0.3\textwidth]
		{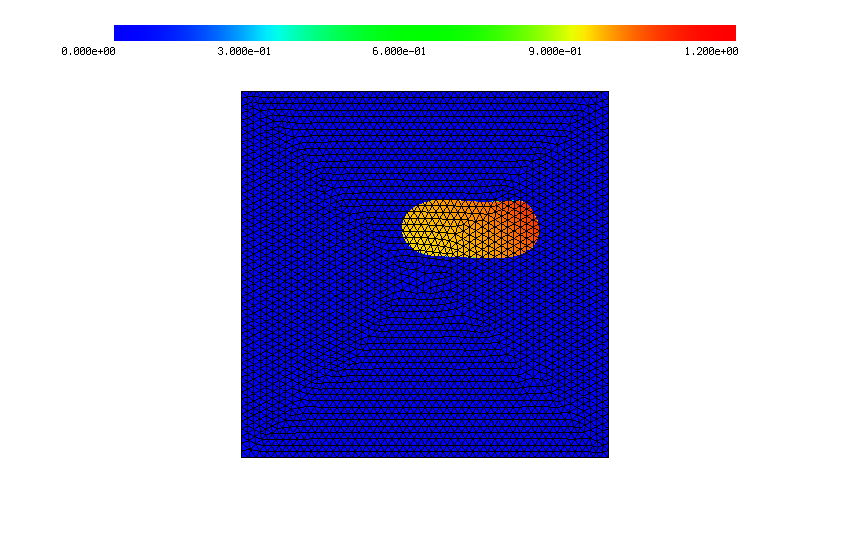}}
	\subfigure[reconstruct with 0.5$\%$ noise]{\includegraphics[width=0.3\textwidth]
		{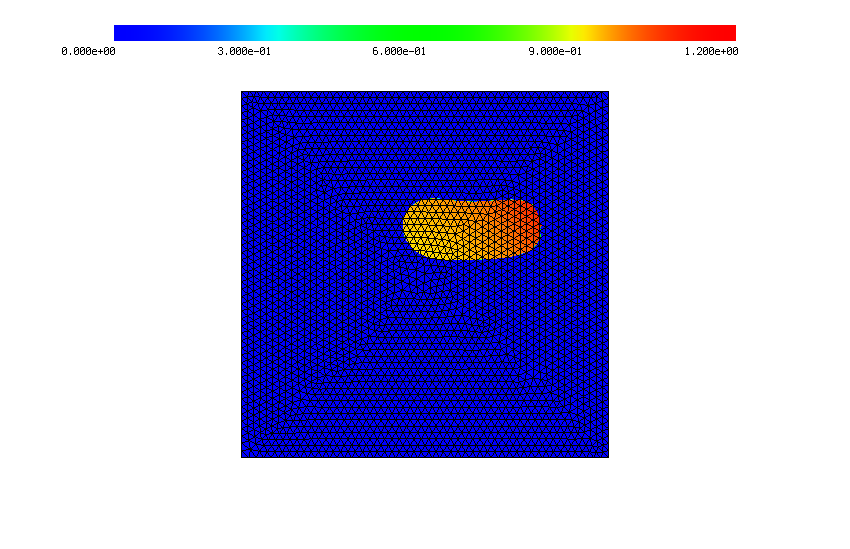}}		
	\caption{Numerical results for Example \ref{ex:rect} under different noise levels: (d)–(f) GZ method; (g)–(i) our method. The colorbars are uniformly set to [0, 1.2].}
	\label{fig:rect}
\end{figure}

\begin{table}[htbp]
	\caption{Numerical results for Example \ref{ex:rect} with different noise levels.}
	\centering
	\begin{tabular}{lcccc}
		\hline
		Method & Error & $\delta=0.0001$ & $\delta=0.001$ & $\delta=0.005$ \\
		\hline
		GZ & ${\rm err} (\Omega_{\varepsilon})$ & $7.2185 \times 10^{-2}$ & $7.4770 \times 10^{-2}$ & $7.5563 \times 10^{-2}$ \\
		& ${\rm err} (\phi_\varepsilon)$ & $7.1015 \times 10^{-2}$ & $7.1156 \times 10^{-1}$ & $7.3875 \times 10^{-1}$ \\
		\hline
		Ours & ${\rm err} (\Omega_{\varepsilon})$ & $4.7587 \times 10^{-2}$ & $5.5005 \times 10^{-2}$ & $6.1843 \times 10^{-2}$ \\
		& ${\rm err} (\phi_\varepsilon)$ & $5.1645 \times 10^{-2}$ & $5.1644 \times 10^{-2}$ & $5.1646 \times 10^{-2}$ \\
		\hline
	\end{tabular}
	\label{tab:rect}
\end{table}

\begin{example}\label{ex:sepa}
	In this example, we set $f=1 \ \textrm{in} \ \Omega$, and define the exact source function $\phi_* = 1$ as well as the exact domain $\Omega_* = \{ (x,y) \in \Omega : (x \pm 0.45)^2 + (y \pm 0.45)^2 < 0.04 \}$. We choose the initial domain as $\Omega_{init} = \{ (x,y) \in \Omega : x^2 + y^2 < 0.07 \}$. 
\end{example}

Table \ref{tab:sepa} reports the relative errors, and Figure \ref{fig:sepa} shows the reconstructed support and intensity. This example demonstrates the ability of our algorithm to handle topological changes, as the true source consists of two separated circular subdomains. As shown in Figures \ref{fig:sepa} (a) and (c), the numerical reconstruction agrees well with the true solution, accurately recovering both the support and the intensity. When uniformly distributed random noise with levels $\delta = 0.0001, 0.001, 0.005$ is added to the boundary measurement $g_2$, the reconstructions in Figures \ref{fig:sepa} (g)–(i) remain satisfactory. For comparison, Figures \ref{fig:sepa} (d)–(f) present the results reproduced using the GZ method. Under the same mesh resolution and noise levels, the GZ reconstruction tends to deviate from circularity after the two components separate, exhibiting noticeable droplet-like distortions, whereas our method preserves the circular shape of both subdomains with significantly higher fidelity.

\begin{figure}[htbp]
	\centering
	\subfigure[true source $\Omega_*$ ]
	{\includegraphics[width=0.3\textwidth]
		{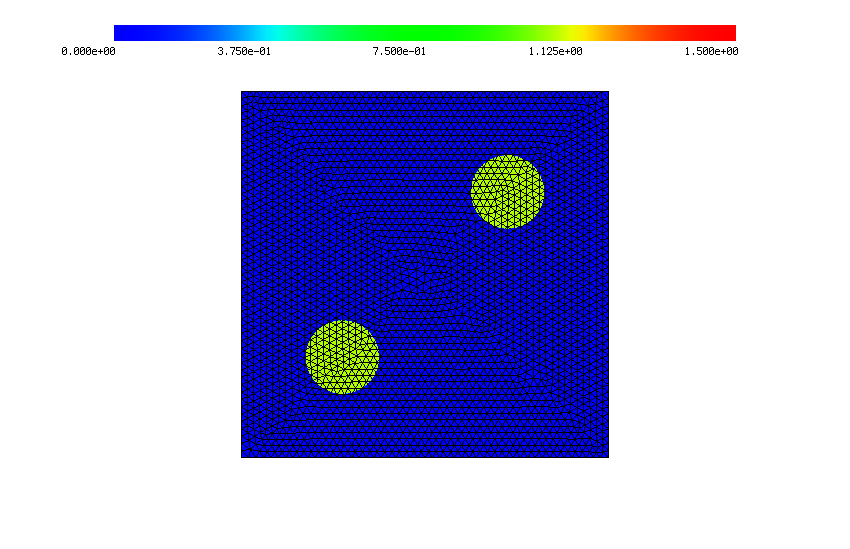}}
	\subfigure[initial domain $\Omega_{init}$]
	{\includegraphics[width=0.3\textwidth]
		{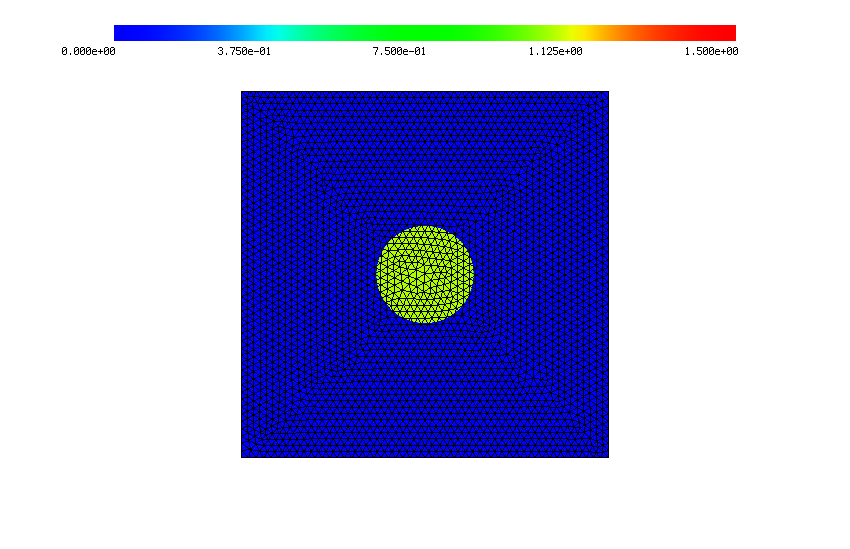}}
	\subfigure[reconstruct without noise]{\includegraphics[width=0.3\textwidth]
		{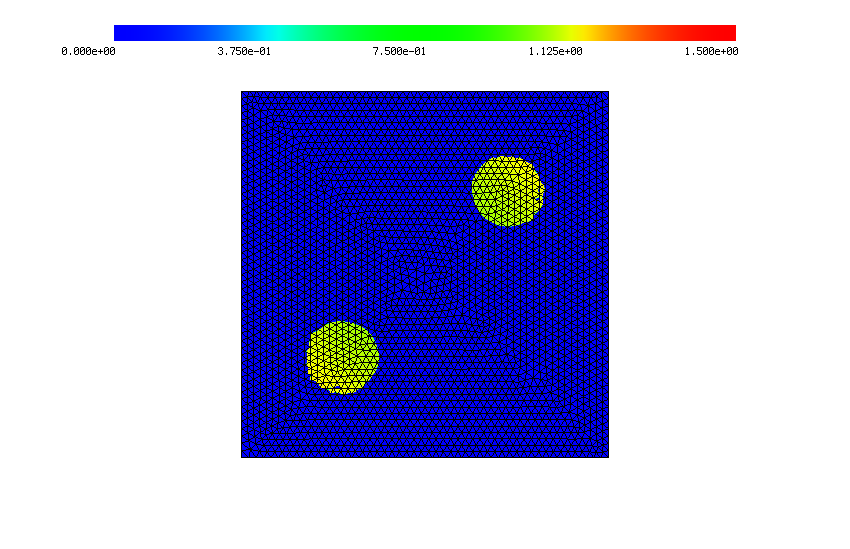}} \\
	\subfigure[reconstruct with 0.01$\%$ noise]{\includegraphics[width=0.3\textwidth]
		{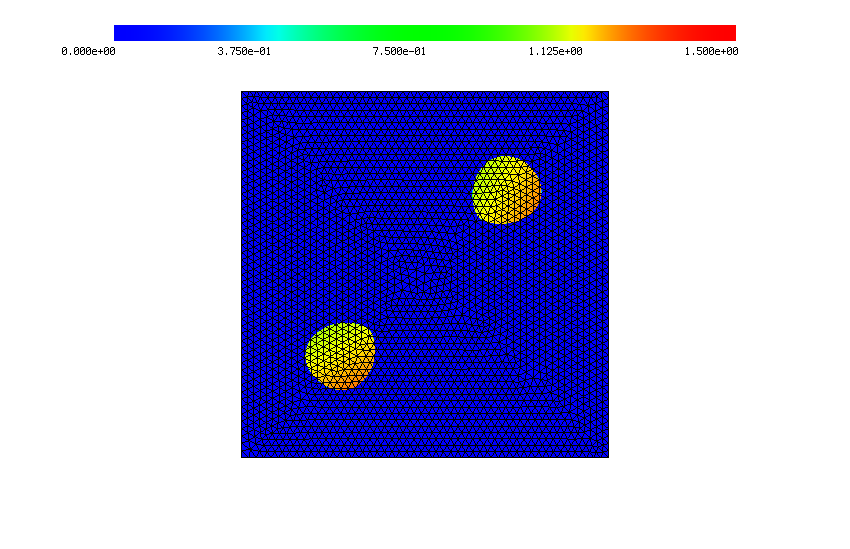}}	
	\subfigure[reconstruct with 0.1$\%$ noise]{\includegraphics[width=0.3\textwidth]
		{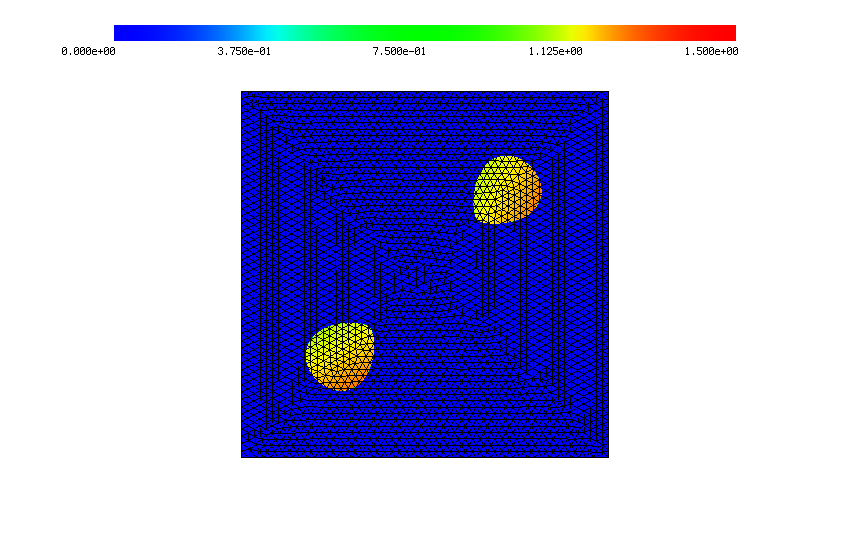}}
	\subfigure[reconstruct with 0.5$\%$ noise]{\includegraphics[width=0.3\textwidth]
		{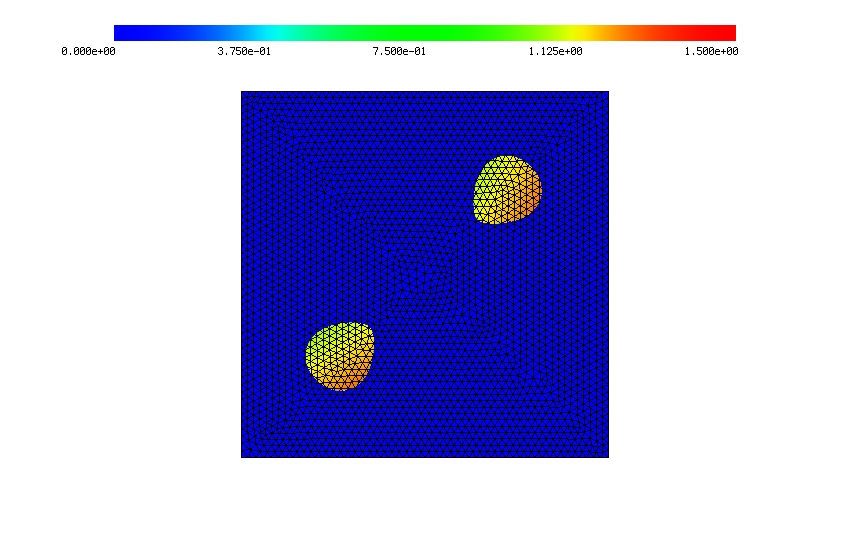}} \\
	\subfigure[reconstruct with 0.01$\%$ noise]
	{\includegraphics[width=0.3\textwidth]
		{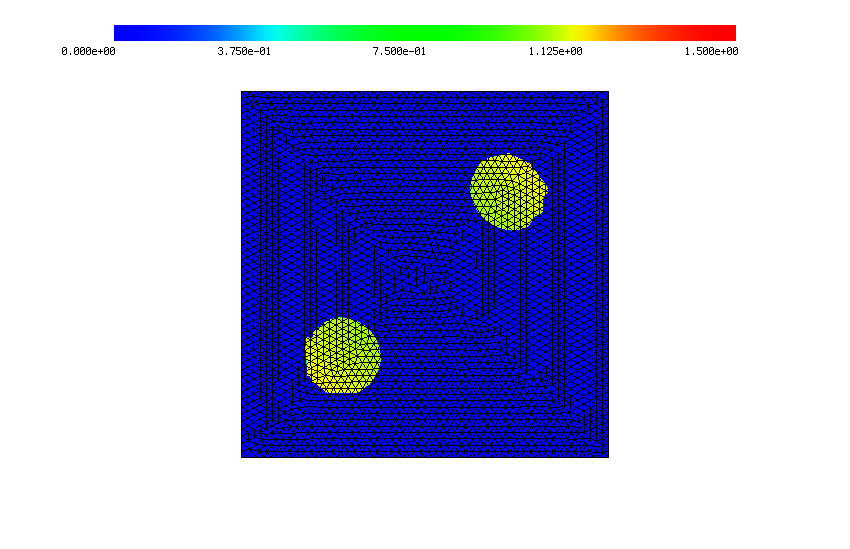}}	
	\subfigure[reconstruct with 0.1$\%$ noise]
	{\includegraphics[width=0.3\textwidth]
		{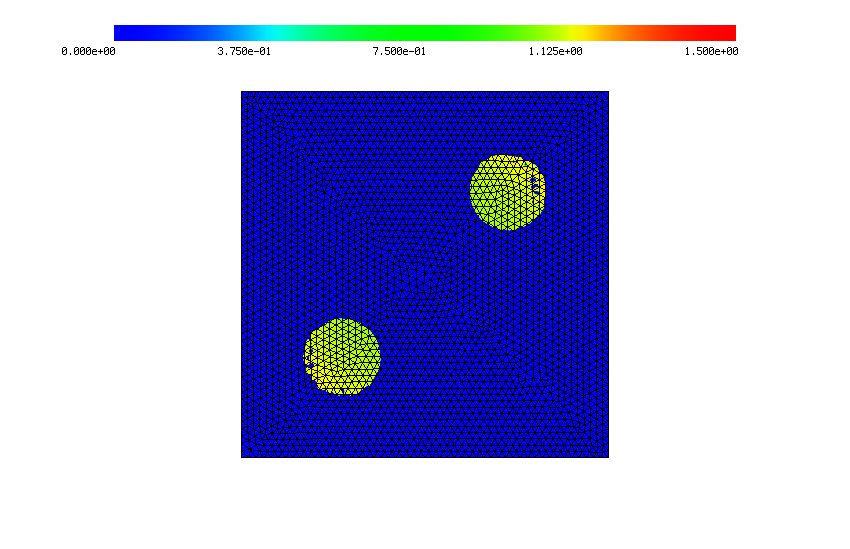}}
	\subfigure[reconstruct with 0.5$\%$ noise]
	{\includegraphics[width=0.3\textwidth]
		{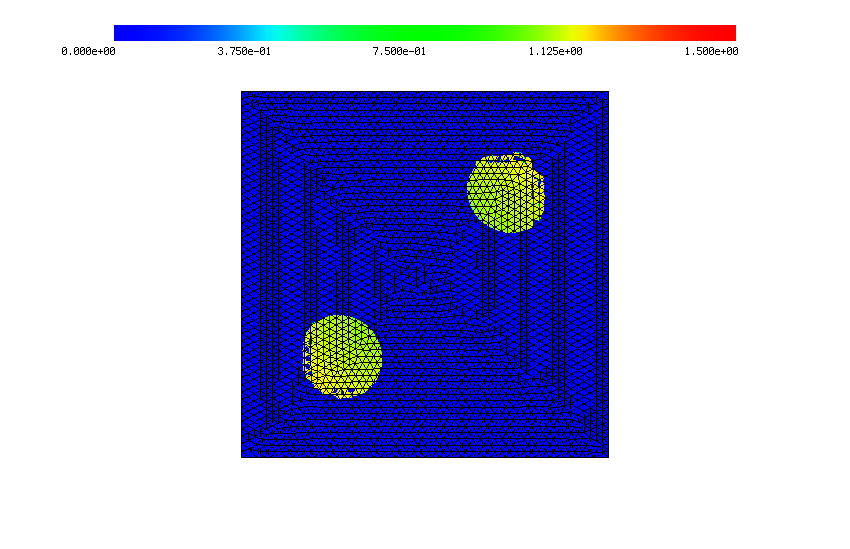}}
	\caption{Numerical results for Example \ref{ex:sepa} under different noise levels: (d)–(f) GZ method; (g)–(i) our method. The colorbars are uniformly set to [0, 1.5].}
	\label{fig:sepa}
\end{figure}

\begin{table}[htbp]
	\caption{Numerical results for Example \ref{ex:sepa} with different noise levels.}
	\centering
	\begin{tabular}{lcccc}
		\hline
		Method & Error & $\delta=0.0001$ & $\delta=0.001$ & $\delta=0.005$ \\
		\hline
		GZ & ${\rm err} (\Omega_{\varepsilon})$ & $8.2751 \times 10^{-2}$ & $8.3527 \times 10^{-2}$ & $8.8435 \times 10^{-2}$ \\
		& ${\rm err} (\phi_{\varepsilon})$ & $1.4478 \times 10^{-1}$ & $1.4702 \times 10^{-1}$ & $1.5543 \times 10^{-1}$ \\			
		\hline
		Ours & ${\rm err} (\Omega_{\varepsilon})$ & $4.0068 \times 10^{-2}$ & $5.5411 \times 10^{-2}$ & $5.7244 \times 10^{-2}$ \\
		& ${\rm err} (\phi_{\varepsilon})$ & $5.7564 \times 10^{-2}$ & $5.7567 \times 10^{-2}$ & $5.7591 \times 10^{-2}$ \\			
		\hline
	\end{tabular}
	\label{tab:sepa}
\end{table}

Figure \ref{fig:sepa_near} further illustrates the capability of the proposed algorithm to recover multiple closely positioned sources. The exact domain is $\Omega_* = { (x,y) \in \Omega : (x \pm 0.2)^2 + (y \pm 0.2)^2 < 0.04 }$, with the same initial domain as before. The accurate recovery of multiple closely spaced sources is generally challenging. Figure~\ref{fig:sepa_near} (b) shows that our algorithm successfully evolved the initial single circular source into two very close independent true sources, with the CCBM approach playing a key role in stabilizing shape evolution.

\begin{figure}[htbp]
	\centering
	\subfigure[true source $\Omega_*$ ]
	{\includegraphics[width=0.4\textwidth]
		{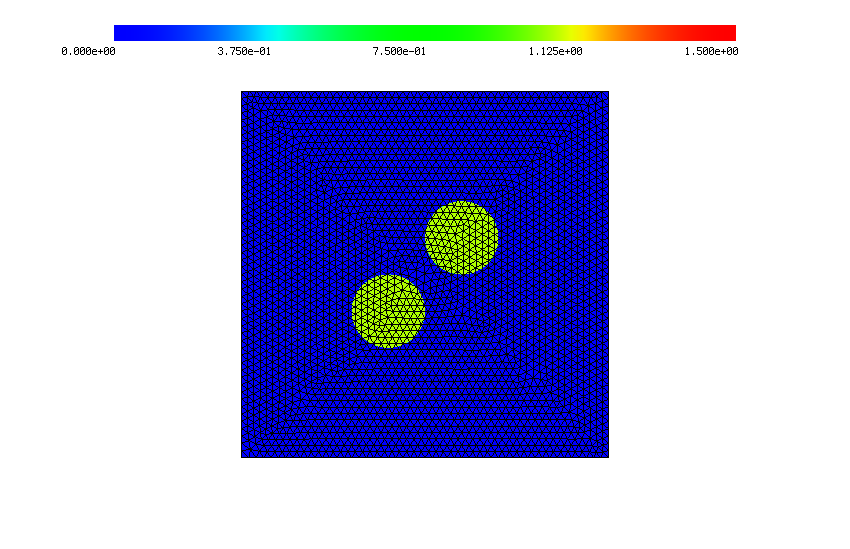}}
	\subfigure[reconstruct with 0.01$\%$ noise]
	{\includegraphics[width=0.4\textwidth]			
		{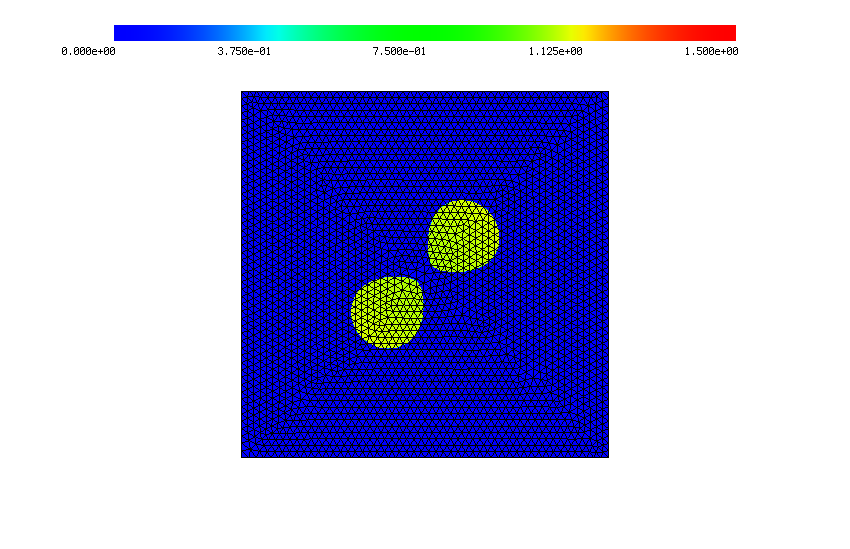}}		
	\caption{Numerical results for Example \ref{ex:sepa} with two near circles. The colorbars are uniformly set to [0, 1.5].}
	\label{fig:sepa_near}
\end{figure}

\begin{example} \label{ex:comb}
	In this example, we set $f=1 \ \textrm{in} \ \Omega$, with the exact source function $\phi_* = 2$ and the exact domain $\Omega_* = \{ (x,y) \in \Omega : x^2 + y^2 < 0.15 \}$. We choose the initial domain as $\Omega_{init} = \{ (x,y) \in \Omega : (x \pm 0.3)^2 + y^2 <0.02 \}$. 
\end{example}

In this example, we demonstrate the capability of the proposed algorithm to merge multiple connected domains, complementing the previous example in which the domain was split. When topological changes occur, the perimeter constraint becomes particularly important and must be carefully chosen. Examples~\ref{ex:sepa} and~\ref{ex:comb} together illustrate the effectiveness of the algorithm for handling topological changes in source inversion. 

The relative errors are listed in Table \ref{tab:comb}, and the reconstructed support and intensity are shown in Figure~\ref{fig:comb}. As observed in Figures \ref{fig:comb} (a) and (c), the numerical reconstructions closely approximate the exact solution, accurately recovering both the support and the intensity of the source. When uniformly distributed random noise with levels $\delta = 0.0001, 0.001, 0.005$ is added to boundary measurement $g_2$, the reconstructions in Figures \ref{fig:comb} (d)–(f) remain satisfactory. For comparison, Figures \ref{fig:comb} (g)–(i) present the results obtained by GZ. Under identical mesh resolution and noise conditions, our method achieves slightly more accurate reconstructions, and the relative errors exhibit limited sensitivity to imposed noise levels.

\begin{figure}[htbp]
	\centering
	\subfigure[true source $\Omega_*$ ]{\includegraphics[width=0.3\textwidth]
		{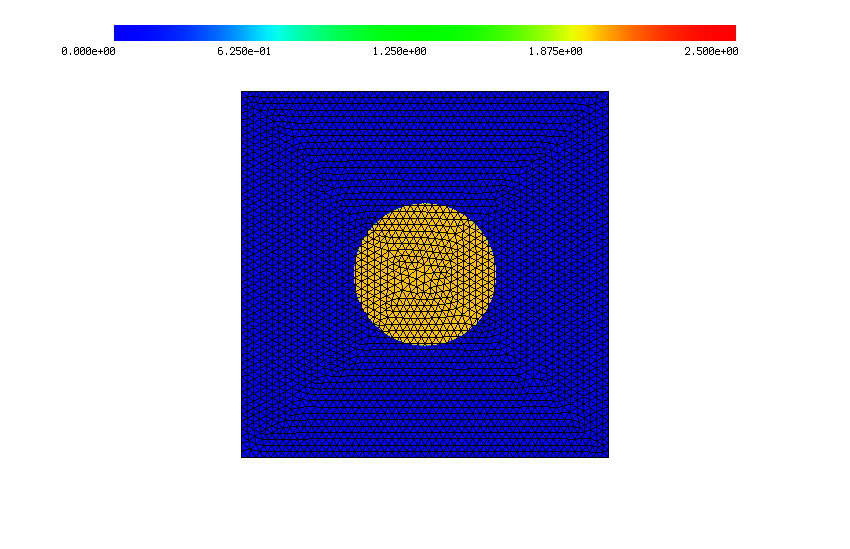}}
	\subfigure[initial domain $\Omega_{init}$]{\includegraphics[width=0.3\textwidth]
		{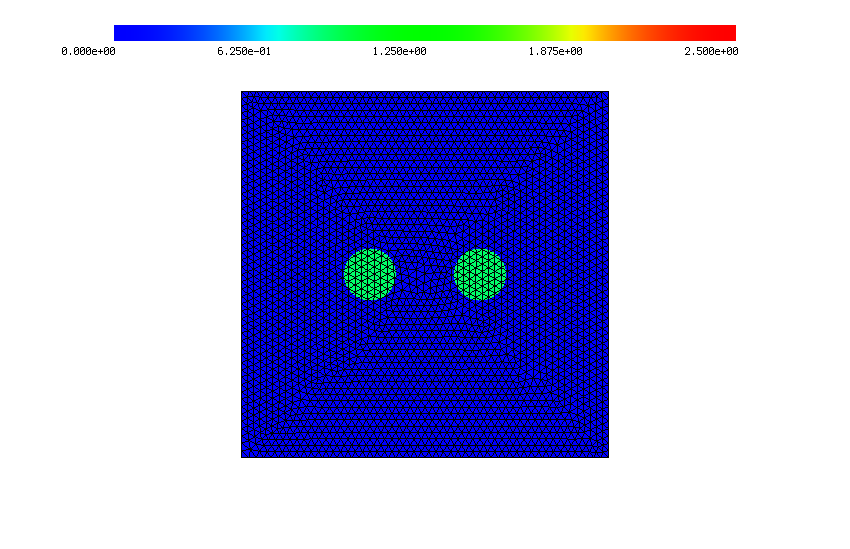}}
	\subfigure[reconstruct without noise]{\includegraphics[width=0.3\textwidth]
		{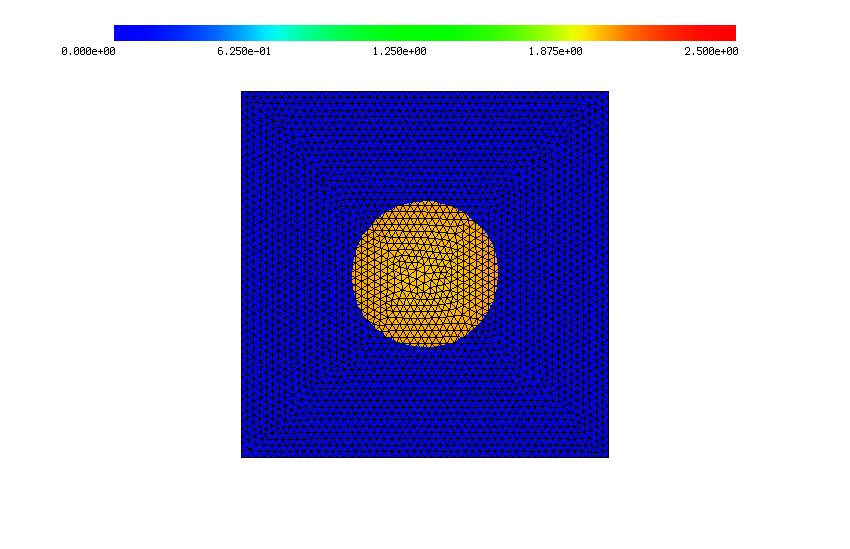}}\\
	\subfigure[reconstruct with 0.01$\%$ noise]{\includegraphics[width=0.3\textwidth]
		{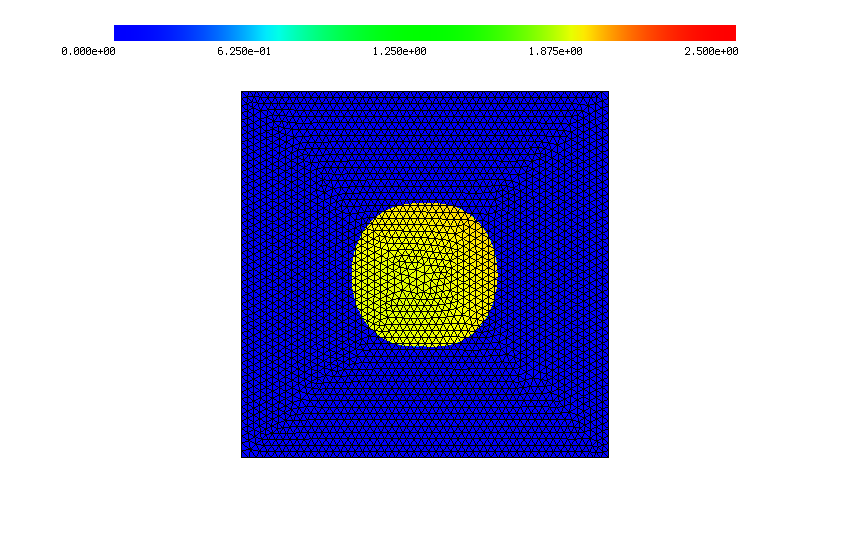}}	
	\subfigure[reconstruct with 0.1$\%$ noise]{\includegraphics[width=0.3\textwidth]
		{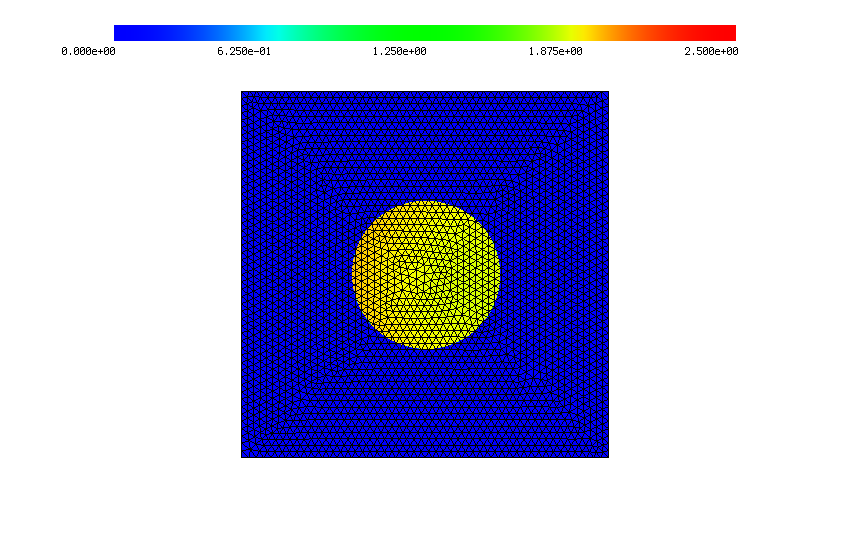}}
	\subfigure[reconstruct with 0.5$\%$ noise]{\includegraphics[width=0.3\textwidth]
		{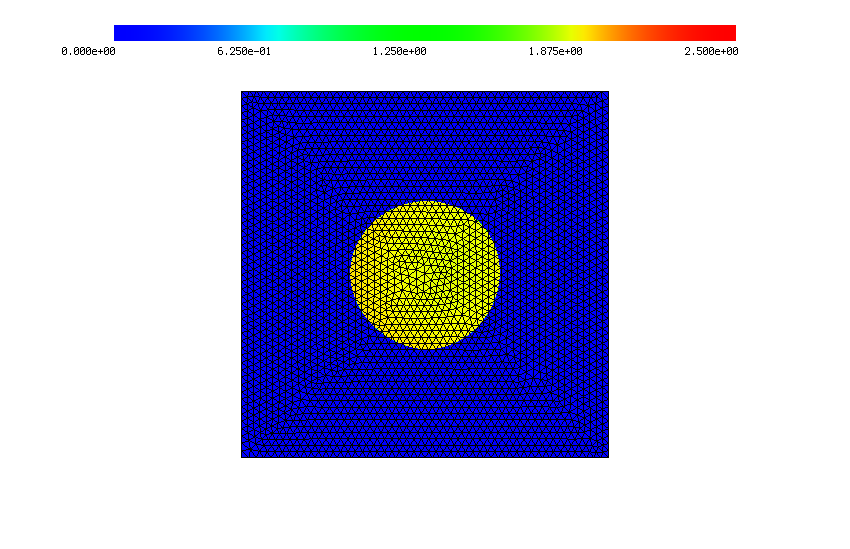}}\\
	\subfigure[reconstruct with 0.01$\%$ noise]{\includegraphics[width=0.3\textwidth]
		{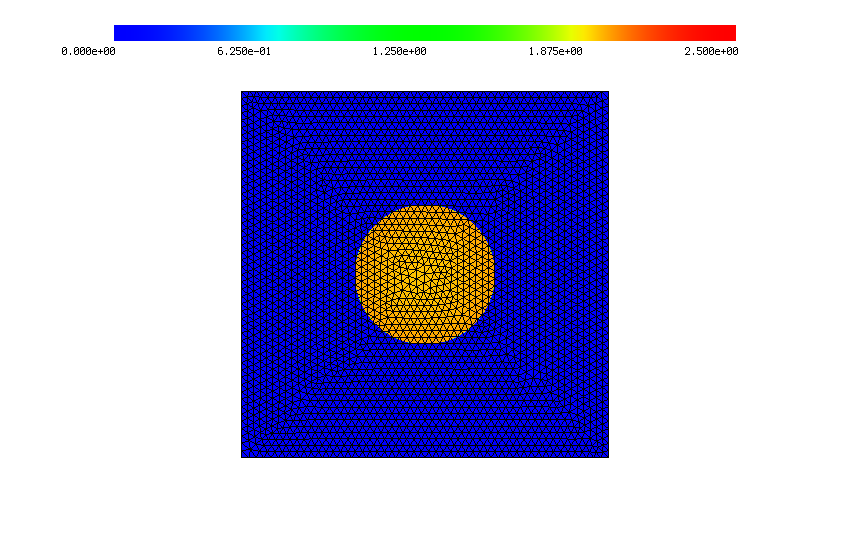}}	
	\subfigure[reconstruct with 0.1$\%$ noise]{\includegraphics[width=0.3\textwidth]
		{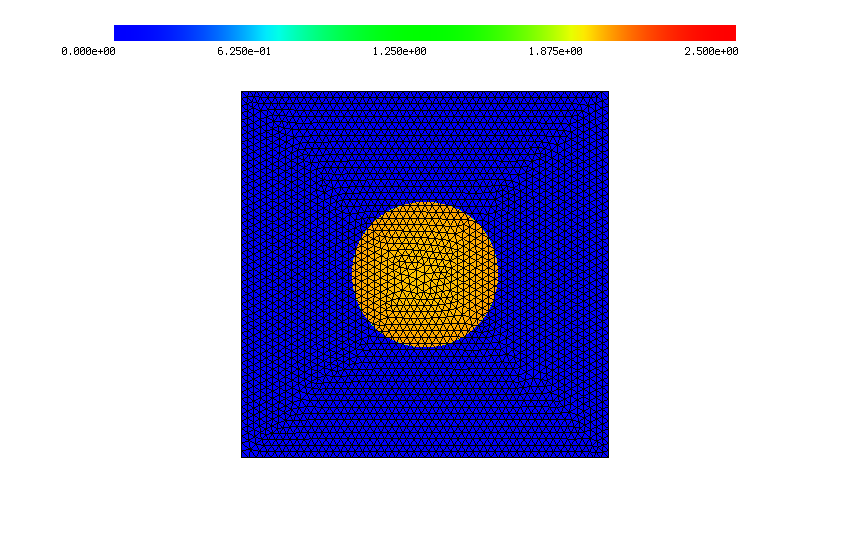}}
	\subfigure[reconstruct with 0.5$\%$ noise]{\includegraphics[width=0.3\textwidth]
		{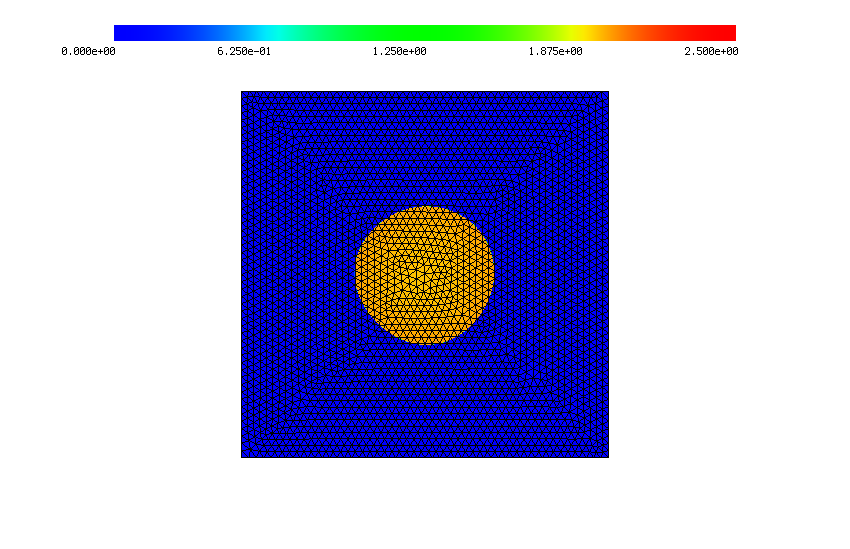}}
	\caption{Numerical results for Example \ref{ex:comb} under different noise levels: (d)–(f) GZ method; (g)–(i) our method. The colorbars are uniformly set to [0, 2.5].}
	\label{fig:comb}
\end{figure}

\begin{table}[htbp]
	\caption{Numerical results for Example \ref{ex:comb} with different noise levels.}
	\centering
	\begin{tabular}{lcccc}
		\hline
		Method & Error & $\delta=0.0001$ & $\delta=0.001$ & $\delta=0.005$ \\
		\hline
		GZ & ${\rm err} (\Omega_{\varepsilon})$ & $8.6546 \times 10^{-2}$ & $8.5601 \times 10^{-2}$ & $8.5910 \times 10^{-2}$ \\
		& ${\rm err} (\phi_{\varepsilon})$ & $4.3038 \times 10^{-2}$ & $4.3205 \times 10^{-2}$ & $4.3421 \times 10^{-2}$ \\			
		\hline 
		Ours & ${\rm err} (\Omega_{\varepsilon})$ & $5.4718 \times 10^{-2}$ & $5.8548 \times 10^{-2}$ & $6.2190 \times 10^{-2}$ \\
		& ${\rm err} (\phi_{\varepsilon})$ & $2.0824 \times 10^{-2}$ & $2.0826 \times 10^{-2}$ & $2.0827 \times 10^{-2}$
		\\			
		\hline
	\end{tabular}
	\label{tab:comb}
\end{table}

\begin{example}\label{ex:nest}
	In the final example, we set the exact source function $\phi_{*1}=5, \phi_{*2}=10$. The exact domain $\Omega_{*1}$ is defined as $\{(x,y) \in \Omega : 0.04 < x^2 + y^2 < 0.25\}$, $\Omega_{*2}$ is defined as $\{(x,y) \in \Omega : x^2 + y^2 < 0.04\}$. We choose the initial domain as $\Omega_{init} = \{(x,y) \in \Omega : x^2+y^2 < 0.07\}$. 
\end{example}

In this experiment, we consider the recovery of a piecewise constant source consisting of two nested layers. Reconstruction is performed in two stages. In the first stage, we employ the shape optimization procedure in Algorithm \ref{alg:SSDM} to identify the support of the outer layer, after which its intensity is obtained using the parameter-dependent CCBM scheme \eqref{eq:ccbm_intensity}. In the second stage, to isolate the contribution of the inner source, we subtract the boundary data generated by the recovered outer source from the total boundary measurements. The resulting residual data is used to reconstruct the support and intensity of the inner source within the same optimization framework.

The relative errors are listed in Table~\ref{tab:nest2}, and the reconstructed supports and intensities are shown in Figure \ref{fig:nest2}. As illustrated, the localization of the inner source exhibits a greater deviation than that of the outer source. This is primarily because the observation data for the inner source is affected by the reconstruction error of the outer source. To further substantiate this observation, we conducted an additional numerical experiment using a three-layer nested source.

\begin{figure}[htbp]
	\centering
	\subfigure[true source $\Omega_*$]
	{\includegraphics[width=0.3\textwidth]			
		{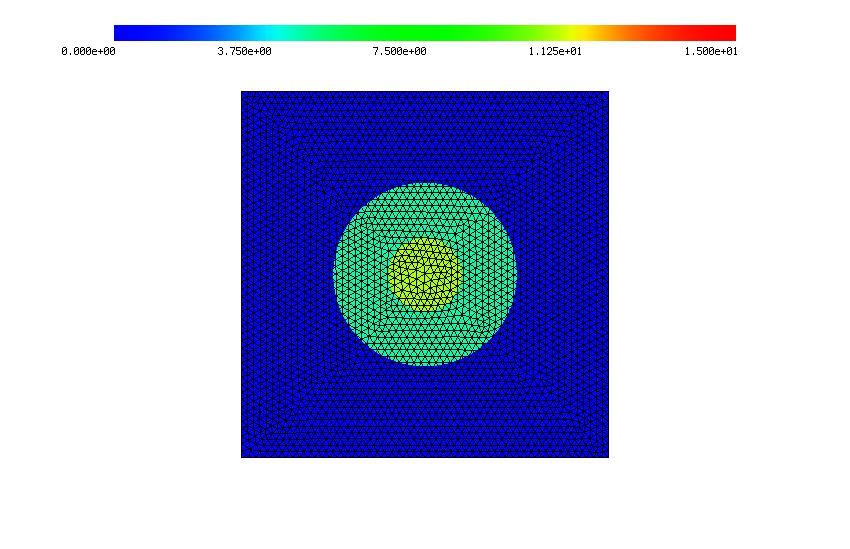}}
	\subfigure[initial domain $\Omega_{init}$]
	{\includegraphics[width=0.3\textwidth]			
		{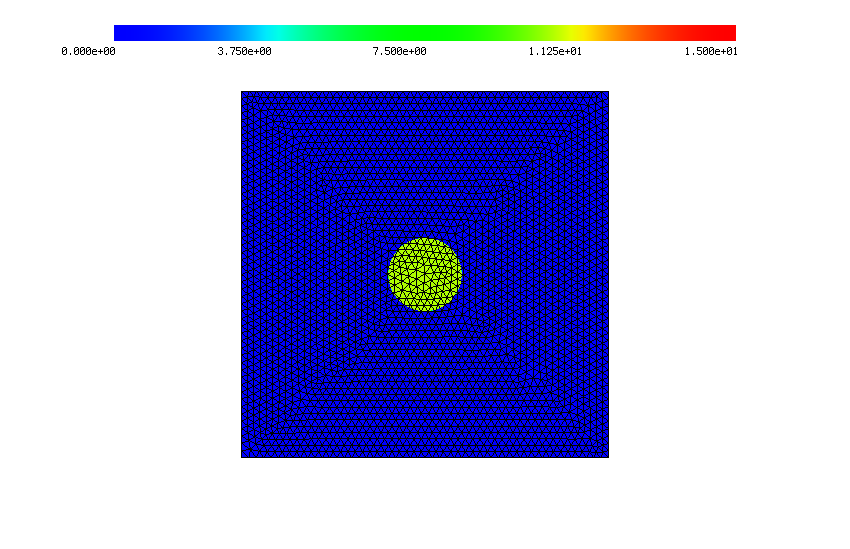}}
	\subfigure[reconstruct without noise]
	{\includegraphics[width=0.3\textwidth]			
		{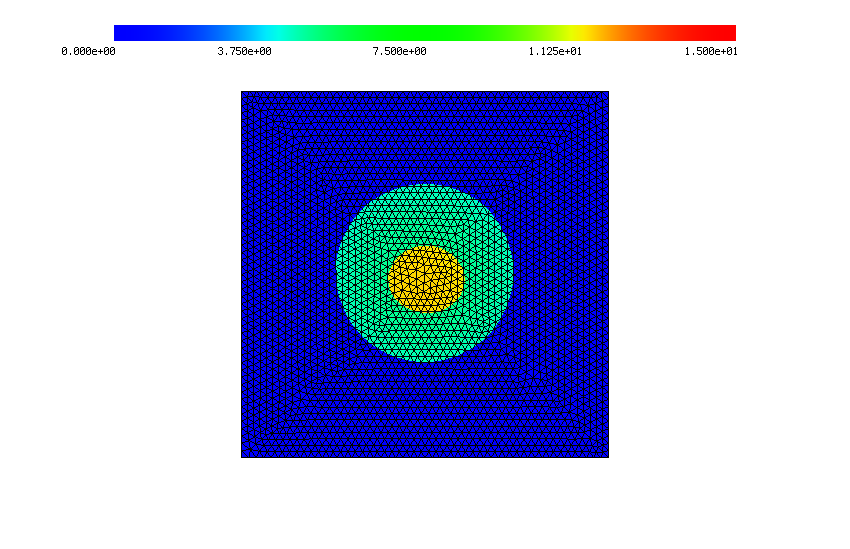}} \\
	\subfigure[reconstruct with 0.01$\%$ noise]
	{\includegraphics[width=0.3\textwidth]			
		{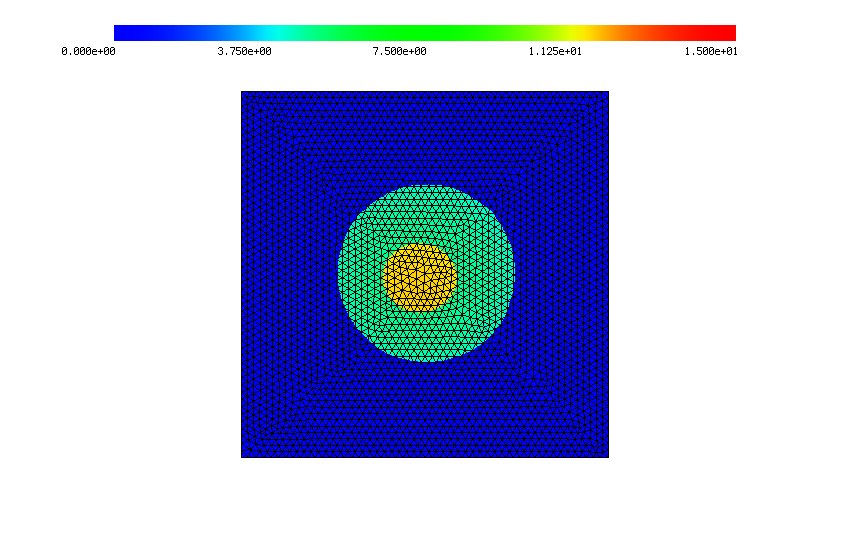}}	
	\subfigure[reconstruct with 0.1$\%$ noise]
	{\includegraphics[width=0.3\textwidth]			
		{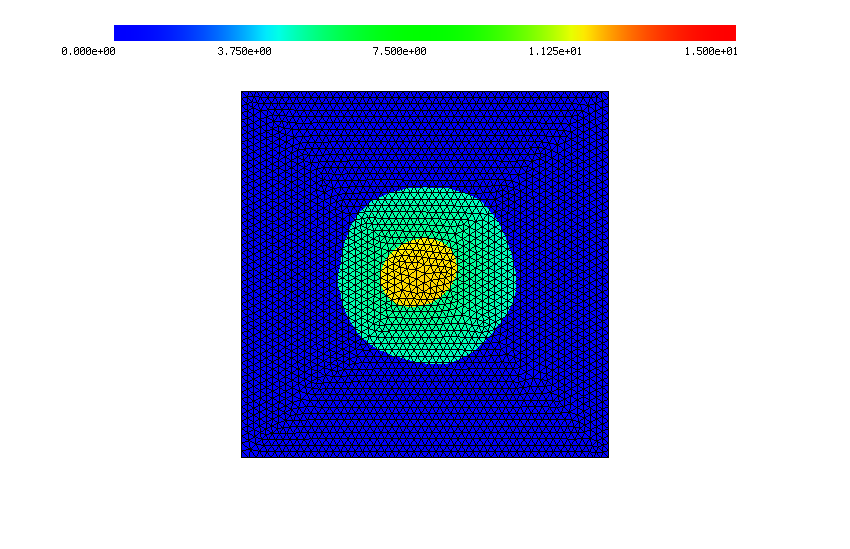}}
	\subfigure[reconstruct with 0.5$\%$ noise]
	{\includegraphics[width=0.3\textwidth]			
		{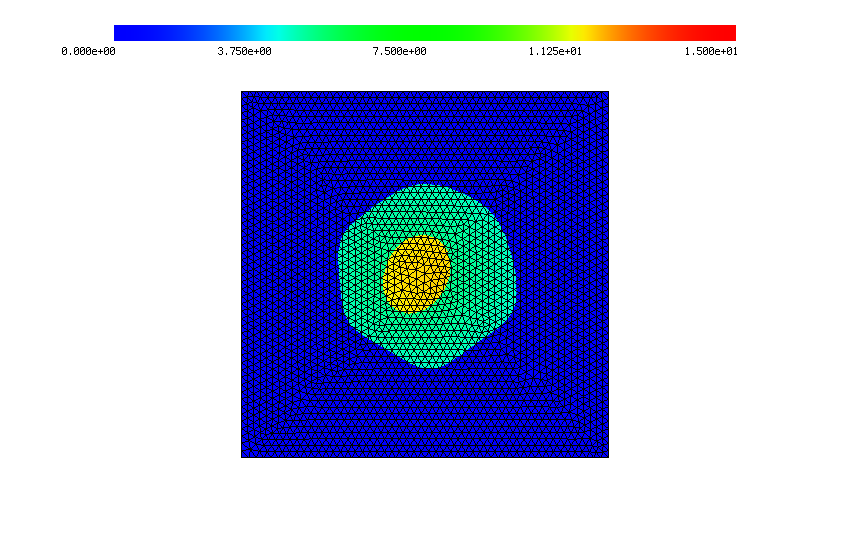}}	
	\caption{Numerical results for Example \ref{ex:nest} under different noise levels. The colorbars are uniformly set to [0, 15].}
	\label{fig:nest2}
\end{figure}

\begin{table}[htbp]
	\caption{Numerical results for Example \ref{ex:nest} with different noise levels.}
	\centering
	\begin{tabular}{lcccc}
		\hline
		Area & Error & $\delta=0.0001$ & $\delta=0.001$ & $\delta=0.005$ \\
		\hline
		$\Omega_{*1}$ & ${\rm err} (\Omega_{\varepsilon})$ & $3.1118 \times 10^{-2}$ & $3.2674 \times 10^{-2}$ & $6.0068 \times 10^{-2}$ \\
		& ${\rm err} (\phi_{\varepsilon})$ & $6.0123 \times 10^{-2}$ & $5.9684 \times 10^{-2}$ & $5.5568 \times 10^{-2}$ \\			
		\hline
		$\Omega_{*2}$ & ${\rm err} (\Omega_{\varepsilon})$ & $8.7105 \times 10^{-2}$ & $1.3717 \times 10^{-1}$ & $1.6037 \times 10^{-1}$ \\
		& ${\rm err} (\phi_{\varepsilon})$ & $5.7638 \times 10^{-2}$ & $5.9029 \times 10^{-2}$ & $5.9207 \times 10^{-2}$ \\	
		\hline
	\end{tabular}
	\label{tab:nest2}
\end{table}

We set the exact source function $\phi_{*1}=5, \phi_{*2}=10, \phi_{*3}=15$. The exact domain $\Omega_{*1}$ is defined as $\{(x,y) \in \Omega : 0.16 < x^2 + y^2 < 0.36\}$, $\Omega_{*2}$ is defined as $\{(x,y) \in \Omega : 0.04 < x^2 + y^2 < 0.16\}$, $\Omega_{*3}$ is defined as $\{(x,y) \in \Omega : x^2 + y^2 < 0.04\}$. We choose the initial domain as $\Omega_{init} = \{(x,y) \in \Omega : x^2+y^2 < 0.02\}$. 

The relative errors are summarized in Table~\ref{tab:nest3}, and the reconstructed supports and intensities are shown in Figure~\ref{fig:nest3}. As shown in Figures~\ref{fig:nest3} (a) and (b), the localization accuracy progressively deteriorates toward the inner layers, which consequently results in larger intensity errors.

\begin{figure}[htbp]
	\centering
	\subfigure[true source $\Omega_*$]
	{\includegraphics[width=0.4\textwidth]{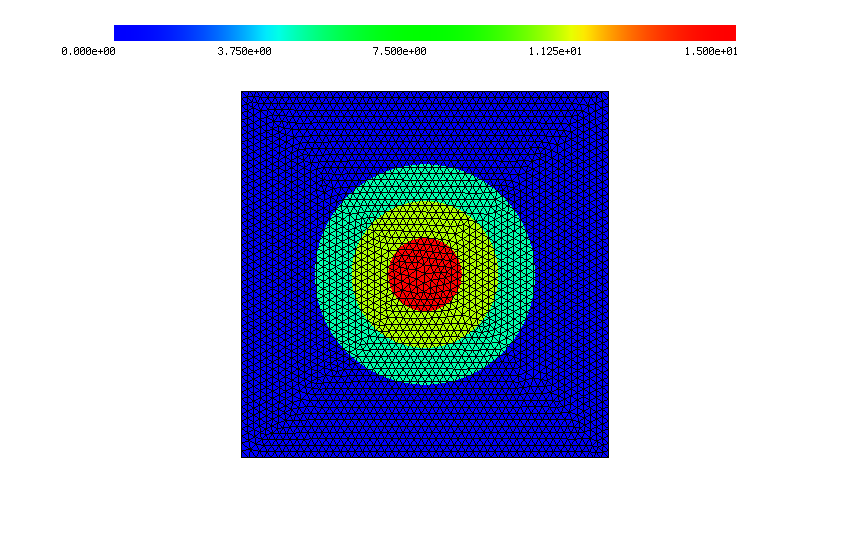}}
	\subfigure[reconstruct with 0.01$\%$ noise]
	{\includegraphics[width=0.4\textwidth]{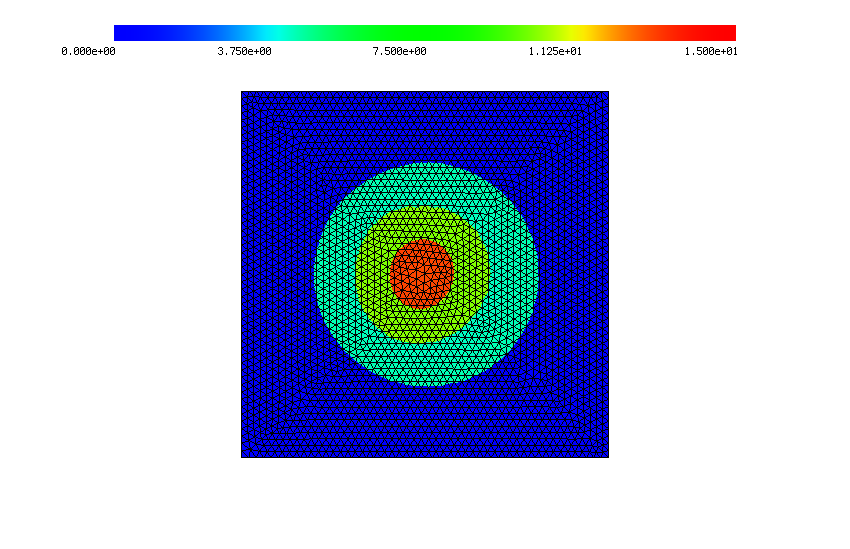}}
	\caption{Numerical results for Example \ref{ex:nest} with 3-layer nested source. The colorbars are uniformly set to [0, 15].}
	\label{fig:nest3}
\end{figure}

\begin{table}[htbp]
	\caption{Numerical results for Example \ref{ex:nest} with 3-layer nested source.}
	\centering
	\begin{tabular}{lccc}
		\hline
		Error & $\Omega_{*1}$ & $\Omega_{*2}$ & $\Omega_{*3}$ \\
		\hline
		${\rm err} (\Omega_{\varepsilon})$ & $2.1591 \times 10^{-2}$ & $7.5698 \times 10^{-2}$ & $1.2541 \times 10^{-1}$ \\
		${\rm err} (\phi_{\varepsilon})$ & $5.3790 \times 10^{-2}$ & $5.9983 \times 10^{-2}$ & $8.5075 \times 10^{-2}$ \\
		\hline
	\end{tabular}
	\label{tab:nest3}
\end{table}

\section{Conclusion} \label{sec:concl}
In this work, we address the inverse source problem in BLT with the goal of reconstructing both the support and intensity of internal light sources from boundary measurements. Our approach decouples intensity and geometry through a shape optimization framework and employs a level set representation, enabling the first truly nonparametric reconstruction of source regions without any a priori shape assumptions. Numerical experiments confirm the method’s ability to accurately recover multiple, closely spaced, or nested sources, even under noisy conditions. Specifically, for crescent-shaped and rectangular sources with sharp corners, where boundary features are difficult to capture, our method achieves satisfactory reconstruction of the source regions. For multiply connected regions, the reconstructed sources closely match the ground truth; even when a single initial source is used to reconstruct two closely spaced true sources, the boundaries are well separated. For nested sources, we observe that the reconstruction accuracy decreases for inner layers, likely due to the influence of outer layers on the corresponding boundary measurements. These results demonstrate clear advantages over existing methods in terms of robustness, accuracy, and flexibility, highlighting the method’s potential for practical BLT applications and its significance as a step forward in non-invasive source imaging.

%
	

\bibliographystyle{unsrt}  
\bibliography{references}


%
%

%
%
%

\end{document}